\documentclass[a4paper, 11pt]{article}
\usepackage{amsmath, amssymb, amsthm, enumitem, hyperref, mathtools, mathrsfs, newtxtext, newtxmath, tikz-cd, titlesec, url}
\usepackage[font=small, width=0.75\textwidth]{caption}
\usepackage[margin=30truemm]{geometry}

\theoremstyle{definition}
\newtheorem{dfn}{Definition}[section]
\newtheorem{eg}[dfn]{Example}
\newtheorem{ntn}[dfn]{Notation}
\newtheorem{rmk}[dfn]{Remark}
\theoremstyle{plain}
\newtheorem{cor}[dfn]{Corollary}
\newtheorem{lem}[dfn]{Lemma}
\newtheorem{prop}[dfn]{Proposition}
\newtheorem{thm}[dfn]{Theorem}

\newcommand{\category}[1]{\mathbf{#1}}
\newcommand{\categoryMod}{\mathbf{\mathchar`-Mod}}
\newcommand{\set}[1]{\mathbb{#1}}

\newcommand{\closedinterval}{\mathtt{I}}
\renewcommand{\mod}{\mathop{\mathrm{mod}}}
\DeclareMathOperator{\sgn}{sgn}

\newcommand{\demph}[1]{\textbf{#1}}
\newcommand{\affiliation}[1]{\textnormal{\small #1}}
\newcommand{\email}[1]{\texttt{\small #1}}

\numberwithin{equation}{section}
\setlist{itemsep=0ex}
\titleformat*{\section}{\large\bfseries}

\title{A note on symmetric midpoint algebras, or scales}
\author{
  Wataru Tanizaki \\
  \affiliation{Research Institute for Mathematical Sciences, Kyoto University, Japan} \\
  \email{tanizaki@kurims.kyoto-u.ac.jp}
}
\date{}

\begin{document}
\maketitle

\begin{abstract}
In this paper, we investigate some basic properties of algebraic structures called symmetric midpoint algebras, for which we borrow the term “scales” from Peter Freyd. Scales are broadly divided into cancellative ones and non-cancellative ones; the former are precisely subscales of modules over the ring of dyadic rational numbers, and include free scales; the latter can be turned cancellative by taking quotient scales. Furthermore, we can define tensor products of scales similarly to those of abelian groups, with respect to which the category of scales is a symmetric monoidal closed category. We shall call its monoid objects “scalic rings” after the fact that rings are monoid objects in the category of abelian groups.
\end{abstract}

\begin{figure}[h]
  \centering
  \begin{tikzcd}[column sep=1.5em, row sep=2ex]
    & (\S\ref{sec:cancellative scales and their enveloping dy-modules}) & & (\S\ref{sec:congruences and cancellativation}) & & (\S\ref{sec:free scales}) \\
    \set D\categoryMod \arrow["\text{inclusion}"', hook, rr, shift right=2] & & \category{Scl}_c \arrow["\text{inclusion}"', hook, rr, shift right=2] \arrow["\bot", "\text{envelopment}"', ll, shift right=2] & & \category{Scl} \arrow["\text{forgetful}"', rr, shift right=2] \arrow["\bot", "\text{cancellativation}"', ll, shift right=2] & & \category{Set} \arrow["\bot", "\text{free}"', ll, shift right=2]
  \end{tikzcd}
  \caption{The relationship among the categories $\category{Set}$ (of sets), $\category{Scl}$ (of scales), $\category{Scl}_c$ (of cancellative scales), and $\set D\categoryMod$ (of modules over the ring $\set D$ of dyadic rational numbers).}
\end{figure}

\section{The definition of symmetric midpoint algebras, or scales}

A \emph{scale} is, originally, an algebraic structure (hereinafter, simply referred to as an “algebra”) that \nobreak{Peter~Freyd} introduced in his article \emph{Algebraic real analysis}~\cite{Fre08}, which is defined as a \emph{symmetric midpoint algebra} together with extra operations. Although scales are the main topic of that article, symmetric midpoint algebras are in themselves interesting because they have many similarities with, and at the same time, many differences from abelian groups. The name “symmetric midpoint algebra” is, however, so long that it is inconvenient for naming related concepts. Thus, in this paper, \emph{we borrow the term “scale” to mean a symmetric midpoint algebra itself}.

First, we present the definition of scales that Freyd originally defined, which we shall refer to as “Freyd scales”:

\begin{dfn}[{\cite[Section 2]{Fre08}}]
  A \demph{Freyd scale} is an algebra $(A,\mid,{}^\bullet,{}^\wedge,\top)$ of type $\langle 2,1,1,0\rangle$, consisting of the following fundamental operations:\begin{itemize}
    \item a binary operation $\mid$ called \demph{midpointing};
    \item a unary operation $^\bullet$ called \demph{dotting};
    \item a unary operation $^\wedge$ called \demph{$\top$-zooming};
    \item a nullary operation (that is, a constant) $\top$ called the \demph{top},
  \end{itemize}satisfying the following axioms for all $a,b,c,d\in A$:\begin{itemize}
    \item (idempotence) $a\mid a=a$;
    \item (commutativity) $a\mid b=b\mid a$;
    \item (mediality) $(a\mid b)\mid(c\mid d)=(a\mid c)\mid(b\mid d)$;
    \item $a^\bullet\mid a=\odot$;
    \item $(\top\mid a)^\wedge=a=(\bot\mid a)^\vee$;
    \item $(a\mid b)^\wedge=(a^\vee\mid b^\wedge)^\wedge\mid(a^\wedge\mid b^\vee)^\wedge$,
  \end{itemize}where the following shorthands are used: $a^\vee:=((a^\bullet)^\wedge)^\bullet$; $\bot:=\top^\bullet$; $\odot:=\bot\mid\top$.
\end{dfn}

\begin{eg}
  \label{eg:Freyd scales}
  For any real numbers $a$ and $b$ such that $a\le b$, the closed interval $[a,b]$ is a Freyd scale under the fundamental operations defined by for all $x,y\in[a,b]$:\[
    x\mid y := \frac{x+y}{2}; \quad x^\bullet := a+b-x; \quad x^\wedge := \max\{2x-b,a\}; \quad \top := b.
  \]
\end{eg}

The definition of symmetric midpoint algebras by Freyd can be rephrased as follows (in which we change the operation symbols and names for this paper):

\begin{dfn}[{\cite[Section 3, Footnote 20]{Fre08}}]
  A \demph{scale} (or \demph{symmetric midpoint algebra}) is an algebra $(A,\oplus,-,0)$ of type $\langle 2,1,0\rangle$, consisting of the following fundamental operations:\begin{itemize}
    \item a binary operation $\oplus$ called \demph{midpointing};
    \item a unary operation $-$ called \demph{negation};
    \item a nullary operation $0$ (zero),
  \end{itemize}satisfying the following axioms for all $a,b,c,d\in A$:\begin{enumerate}[label=\text{(S\arabic*)}]
    \item (idempotence) $a\oplus a=a$; \label{scale axiom:idempotence}
    \item (commutativity) $a\oplus b=b\oplus a$; \label{scale axiom:commutativity}
    \item (mediality) $(a\oplus b)\oplus(c\oplus d)=(a\oplus c)\oplus(b\oplus d)$; \label{scale axiom:mediality}
    \item $a\oplus(-a)=0=(-a)\oplus a$; \label{scale axiom:negation}
    \item $-(a\oplus b)=(-a)\oplus(-b)$; \label{scale axiom:-(a+b)=(-a)+(-b)}
    \item $-(-a)=a$. \label{scale axiom:-(-a)=a}
  \end{enumerate}
\end{dfn}

\begin{prop}[self-distributivity]
  Let $A$ be a scale. Then for all $a,b,c,d\in A$,\[
    (a\oplus b)\oplus c = (a\oplus c)\oplus(b\oplus c) \quad \text{and} \quad b\oplus(c\oplus d) = (b\oplus c)\oplus(b\oplus d).
  \]
\end{prop}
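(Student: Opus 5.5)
The plan is to derive both identities purely from idempotence \ref{scale axiom:idempotence}, commutativity \ref{scale axiom:commutativity}, and mediality \ref{scale axiom:mediality}. The negation axioms will not be needed, since self-distributivity is a general fact about idempotent medial magmas.

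For the first identity, I rewrite $c$ on the left-hand side as $c\oplus c$ using \ref{scale axiom:idempotence}:
\[
(a\oplus b)\oplus c = (a\oplus b)\oplus(c\oplus c).
\]
Then I apply \ref{scale axiom:mediality}, with its variables $a,b,c,d$ instantiated as $a,b,c,c$. This gives
\[
(a\oplus b)\oplus(c\oplus c) = (a\oplus c)\oplus(b\oplus c),
\]
which is the claimed identity.

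For the second identity I argue symmetrically. I write $b=b\oplus b$ by \ref{scale axiom:idempotence}, and apply \ref{scale axiom:mediality} with variables $b,b,c,d$:
\[
(b\oplus b)\oplus(c\oplus d) = (b\oplus c)\oplus(b\oplus d).
\]
Alternatively, the second identity follows from the first via \ref{scale axiom:commutativity}. Apply commutativity to the outer midpoint on the left, then use the first identity, and finally apply commutativity inside each bracket:
\[
b\oplus(c\oplus d)=(c\oplus d)\oplus b=(c\oplus b)\oplus(d\oplus b)=(b\oplus c)\oplus(b\oplus d).
\]

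I expect no real obstacle here. The only point requiring care is to instantiate the mediality axiom with repeated variables correctly, and to note that idempotence supplies exactly the duplicated term needed to create a medial pattern.
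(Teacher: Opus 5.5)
Your proof is correct and is essentially the paper's argument: the paper sets $c=d$ (respectively $a=b$) in mediality and collapses the doubled term with idempotence, just as you do. Your alternative derivation of the second identity from the first via commutativity is also valid, though not needed.
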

\begin{proof}
  Putting $c=d$ (and respectively, $a=b$) in axiom \ref{scale axiom:mediality} and then applying \ref{scale axiom:idempotence}, we have the first (and respectively, second) equation.
\end{proof}

\begin{prop}
  \label{prop:-0=0}
  Let $A$ be a scale, and $a\in A$. Then $-a=a$ if and only if $a=0$. (That is, $0$ is the only element such that $-0=0$.)
\end{prop}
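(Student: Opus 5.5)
We prove the two directions separately; both follow directly from the axioms.

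For the \emph{if} direction, we show $-0=0$. By \ref{scale axiom:negation}, $0=0\oplus(-0)$. Applying \ref{scale axiom:-(a+b)=(-a)+(-b)} and then \ref{scale axiom:-(-a)=a}, we get
\[
-0=-(0\oplus(-0))=(-0)\oplus(-(-0))=(-0)\oplus 0 .
\]
The right-hand side equals $0$ by the second equality in \ref{scale axiom:negation}. Hence $-0=0$.

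For the \emph{only if} direction, suppose $-a=a$. Then \ref{scale axiom:negation} gives
\[
0=a\oplus(-a)=a\oplus a=a,
\]
where the last step is \ref{scale axiom:idempotence}. Hence $a=0$.

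Neither direction is a real obstacle. The only point that needs care is the first direction: $-0=0$ is not an axiom, so it must be derived from the interaction of negation with midpointing, as above. The second direction is a one-line use of idempotence.
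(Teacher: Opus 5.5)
Your proof is correct and is essentially the same as the paper's. For the ``if'' direction you use the identical chain $-0=-(0\oplus(-0))=(-0)\oplus(-(-0))=(-0)\oplus 0=0$, justified by \ref{scale axiom:negation}, \ref{scale axiom:-(a+b)=(-a)+(-b)}, \ref{scale axiom:-(-a)=a} and \ref{scale axiom:negation}; for the converse you use $a=a\oplus a=a\oplus(-a)=0$, by idempotence and \ref{scale axiom:negation}.
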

\begin{proof}
  If $a=0$, then\[
    -0 \stackrel{\ref{scale axiom:negation}}{=} -(0\oplus(-0)) \stackrel{\ref{scale axiom:-(a+b)=(-a)+(-b)}}{=} (-0)\oplus(-(-0)) \stackrel{\ref{scale axiom:-(-a)=a}}{=} (-0)\oplus 0 \stackrel{\ref{scale axiom:negation}}{=} 0.
  \]Conversely, if $-a=a$, then\[
    a \stackrel{\ref{scale axiom:idempotence}}{=} a\oplus a = a\oplus(-a)\stackrel{\ref{scale axiom:negation}}{=} 0.
  \]Hence $-a=a$ if and only if $a=0$.
\end{proof}

We define homomorphisms, subalgebras, and direct products of scales similarly to those of other types of algebras.

\begin{dfn}
  \label{dfn:scale homomorphisms}
  Let $A$ and $B$ be a scale. A map $f\colon A\to B$ is a \demph{scale homomorphism} if:\begin{itemize}
    \item (preservation of $\oplus$) $f(a\oplus b)=f(a)\oplus f(b)$ for all $a,b\in A$;
    \item (preservation of $-$) $f(-a)=-f(a)$ for all $a\in A$;
    \item (preservation of $0$) $f(0)=0$.
  \end{itemize}
\end{dfn}

\begin{dfn}
  \label{dfn:subscales}
  A \demph{subscale} of a scale $A$ is a subset $S\subset A$ such that:\begin{itemize}
    \item (closure under $\oplus$) $a\oplus b\in S$ for all $a,b\in S$;
    \item (closure under $-$) $-a\in S$ for all $a\in S$;
    \item $0\in S$.
  \end{itemize}
\end{dfn}

\begin{rmk}
  \label{rmk:redundancy of preservation of 0}
  In Definition \ref{dfn:scale homomorphisms}, the condition that $f(0)=0$ is in fact redundant: if $f$ preserves negation, then we have $f(-0)=-f(0)$, and $f(0)=0$ by applying Proposition \ref{prop:-0=0} twice (or more precisely, the “if” part first and the “only if” part second). In Definition \ref{dfn:subscales}, the condition that $0\in S$ is by axiom \ref{scale axiom:negation} redundant \emph{if $S$ is not empty}. To prove that $S$ has an element, however, it is the easiest to check that $0\in S$ in many cases.
\end{rmk}

\begin{dfn}
  The \demph{direct product} of a family $\{A_i\}_{i\in I}$ of scales is the Cartesian product $\prod_{i\in I}A_i$ together with the pointwise fundamental operations; that is, we define:\begin{itemize}
    \item $(a_i)_{i\in I}\oplus(b_i)_{i\in I}:=(a_i\oplus b_i)_{i\in I}$ for all $(a_i)_{i\in I},(b_i)_{i\in I}\in\prod_{i\in I}A_i$;
    \item $-(a_i)_{i\in I}:=(-a_i)_{i\in I}$ for all $(a_i)_{i\in I}\in\prod_{i\in I}A_i$;
    \item $0:=(0)_{i\in I}$.
  \end{itemize}
\end{dfn}

We write $\category{Scl}$ for the category whose objects are scales and morphisms are scale homomorphisms. Since scale axioms \ref{scale axiom:idempotence}-\ref{scale axiom:-(-a)=a} are written only in equations in terms of $\oplus$, $-$, and $0$, it easily follows that the category $\category{Scl}$ (or simply, the class of scales) is a \emph{variety}, which means that it is closed under taking homomorphic images, subalgebras, and direct products in the category of algebras of type $\langle 2,1,0\rangle$. (For details on varieties, see \cite{Ber11} or \cite{BS12} for instance.)

\begin{eg} \
  \label{eg:cancellative scales}
  \begin{enumerate}[label=(\arabic*)]
    \item Let $A$ be a module over the ring of dyadic rational numbers (that is, integers divided by powers of two). Freyd used the symbol $\set D$ for this ring, and referred to modules over it as \demph{dy-modules}. Then $A$ is a scale under the midpointing $\oplus$ defined by\[
      a\oplus b := \frac{a+b}{2}
    \]for all $a,b\in A$, and the negation and zero inherited from the module structure. When we refer to a dy-module as a scale, we always mean this structure. Note that since the ring $\set D$ is generated by a single element $1/2$, a dy-module can be considered as an abelian group with the following \emph{property} (rather than extra \emph{operations}, that is, multiplication by each scalar):\begin{quote}
      for every element $a$, there is a unique element $a/2$ such that $a/2+a/2=a$.
    \end{quote}Such abelian groups includes:\begin{itemize}
      \item the additive groups $\set D$ of dyadic rational numbers, $\set Q$ of rational numbers, $\set R$ of real numbers, and $\set C$ of complex numbers;
      \item all abelian groups of odd order, and in particular, the trivial group $\{0\}$.
    \end{itemize}
    \item For every real number $r>0$, the closed interval $[-r,r]$ is a subscale of $\set R$, and the closed disk $\{z\in\set C\mid|z|\le r\}$ is a subscale of $\set C$. These are examples of subscales of dy-modules which are not sub-dy-modules.
  \end{enumerate}
\end{eg}

It is well known that for abelian groups $A$ and $B$, the set $\category{Ab}(A,B)$ of all group homomorphisms from $A$ to $B$ is an abelian group under the pointwise addition. This is also true for scales:\begin{prop}
  For all scales $A$ and $B$, the set $\category{Scl}(A,B)$ of all scale homomorphisms from $A$ to $B$ is a subscale of $B^A$ (the scale of all maps from $A$ to $B$ under the pointwise fundamental operations).
\end{prop}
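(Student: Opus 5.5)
The plan is to check the three closure conditions of Definition~\ref{dfn:subscales} directly, using the pointwise operations on $B^A$. In every case the work is to show that the pointwise-defined map is again a scale homomorphism. By Remark~\ref{rmk:redundancy of preservation of 0} it suffices to check preservation of $\oplus$ and of $-$, since preservation of $0$ then follows automatically.

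\textbf{Zero.} The constant map $0\colon A\to B$ preserves $\oplus$ by \ref{scale axiom:idempotence}, since $0\oplus 0=0$. It preserves $-$ by Proposition~\ref{prop:-0=0}, since $-0=0$. Hence $0\in\category{Scl}(A,B)$, which also shows that $\category{Scl}(A,B)$ is nonempty.

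\textbf{Negation.} For $f\in\category{Scl}(A,B)$ we have $(-f)(a\oplus b)=-(f(a)\oplus f(b))=(-f(a))\oplus(-f(b))$ by \ref{scale axiom:-(a+b)=(-a)+(-b)}. We also have $(-f)(-a)=-(-f(a))=-((-f)(a))$, using only the definition of $-f$ and that $f$ preserves $-$.

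\textbf{Midpointing.} This is the key step, and the place where mediality is essential. For $f,g\in\category{Scl}(A,B)$, preservation of $\oplus$ is the computation
\[
(f\oplus g)(a\oplus b)=(f(a)\oplus f(b))\oplus(g(a)\oplus g(b))=(f(a)\oplus g(a))\oplus(f(b)\oplus g(b)),
\]
where the last equality is exactly \ref{scale axiom:mediality} in $B$. Preservation of negation is
\[
(f\oplus g)(-a)=(-f(a))\oplus(-g(a))=-(f(a)\oplus g(a)),
\]
which is \ref{scale axiom:-(a+b)=(-a)+(-b)} read backwards.

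None of these steps is a real obstacle. The only point needing attention is recognising that mediality is precisely what makes the pointwise midpoint of two homomorphisms again a homomorphism. This plays the role that commutativity of addition plays in the abelian group case.
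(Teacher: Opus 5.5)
Your proof is correct and follows the paper's argument: the paper checks that $0$, $-f$ and $f\oplus g$ each preserve $\oplus$ and $-$, citing \ref{scale axiom:idempotence}, \ref{scale axiom:mediality}, \ref{scale axiom:-(a+b)=(-a)+(-b)} and Proposition~\ref{prop:-0=0} exactly as you do, and it uses Remark~\ref{rmk:redundancy of preservation of 0} to skip preservation of $0$. The one difference is that the paper attributes $-f$ preserving negation to \ref{scale axiom:-(-a)=a}, whereas you correctly observe that both sides are literally $-(-f(a))$, so that axiom is not needed.
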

\begin{proof}
  If maps $f,g\colon A\to B$ are scale homomorphisms, then it easily follows from the axioms and proposition shown in Table \ref{tab:scale of scale homomorphisms} that so are $f\oplus g$, $-f$, and $0$. (Recall from Remark \ref{rmk:redundancy of preservation of 0} that if a map from $A$ to $B$ preserves negation, then it automatically preserves zero.) \begin{table}[h]
    \centering
    \caption{The axioms and proposition by which $f\oplus g$, $-f$, and $0$ are scale homomorphisms.}
    \label{tab:scale of scale homomorphisms}
    \begin{tabular}{lccc} \hline
      & $f\oplus g$ & $-f$ & $0$ \\ \hline
      Preserves $\oplus$ by & Axiom \ref{scale axiom:mediality} & Axiom \ref{scale axiom:-(a+b)=(-a)+(-b)} & Axiom \ref{scale axiom:idempotence} \\
      Preserves $-$ by & Axiom \ref{scale axiom:-(a+b)=(-a)+(-b)} & Axiom \ref{scale axiom:-(-a)=a} & Proposition \ref{prop:-0=0} \\ \hline
    \end{tabular}
  \end{table}
\end{proof}

\section{Cancellative scales and their enveloping dy-modules}
\label{sec:cancellative scales and their enveloping dy-modules}

We have seen in Example \ref{eg:cancellative scales} (1) that every dy-module is a scale in an obvious way. Dy-modules and their subscales clearly have the following property: \begin{dfn}
  A scale $A$ is \demph{cancellative} if $a\oplus c=b\oplus c$ implies $a=b$ for all $a,b,c\in A$.
\end{dfn}

Conversely, every cancellative scale can be embedded into a dy-module. Freyd deduced that all Freyd scales are cancellative from the axioms (\cite[2.3 Lemma]{Fre08}), and constructed their \emph{enveloping dy-module} (\cite[Section 20]{Fre08}): briefly, given a Freyd scale $A$, we can take the underlying set of its enveloping dy-module to be a quotient set of $\set N\times A$ (where $\set N:=\{0,1,2,...\}$), so that a pair $(n,a)$ represents “$2^n$ times $a$.” This construction is also valid for general cancellative scales. Enveloping dy-modules are characterized with universality as follows: \begin{dfn}
  An \demph{enveloping dy-module} of a cancellative scale $A$ is a dy-module $E(A)$ together with a scale homomorphism $\varepsilon\colon A\to E(A)$ that has the following universality:\begin{quote}
    for every dy-module $B$ together with a scale homomorphism $f\colon A\to B$, there is a unique dy-module homomorphism $\overline f\colon E(A)\to B$ such that the triangle\[
      \begin{tikzcd}
        E(A) \arrow["\overline f", dashed, dr] \\
        A \arrow["f"', r] \arrow["\varepsilon", u] & B
      \end{tikzcd}
    \]commutes.
  \end{quote}
\end{dfn}

\begin{prop}
  \label{prop:homomorphisms between dy-modules}
  A map between dy-modules is a scale homomorphism if and only if it is a dy-module homomorphism.
\end{prop}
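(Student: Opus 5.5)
We prove the two implications separately. Let $A$ and $B$ be dy-modules, regarded as scales as in Example \ref{eg:cancellative scales} (1), and let $f\colon A\to B$ be a map.

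The \emph{if} direction is immediate. Suppose $f$ is a dy-module homomorphism. Then $f$ is additive and commutes with multiplication by $1/2$, so
\[
  f(a\oplus b)=f\bigl(\tfrac{a+b}{2}\bigr)=\tfrac{f(a)+f(b)}{2}=f(a)\oplus f(b).
\]
Also $f(-a)=-f(a)$ and $f(0)=0$. Hence $f$ is a scale homomorphism.

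For the \emph{only if} direction, suppose $f$ is a scale homomorphism. Recall from Example \ref{eg:cancellative scales} (1) that a dy-module is an abelian group in which halving is uniquely defined. So it suffices to show that $f$ is additive. Indeed, $\mathbb{D}$-linearity then follows automatically: from $f(a/2)+f(a/2)=f(a)$ and uniqueness of halves in $B$ we get $f(a/2)=f(a)/2$, and an additive map commutes with multiplication by integers.

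The key step is to derive additivity from preservation of midpoints and of zero.

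\begin{itemize}
  \item First, setting $b=0$ gives
  \[
    f(a/2)=f(a\oplus 0)=f(a)\oplus f(0)=f(a)\oplus 0=f(a)/2,
  \]
  using $f(0)=0$ (Remark \ref{rmk:redundancy of preservation of 0}).
  \item Then, for arbitrary $a,b\in A$,
  \[
    \tfrac{f(a+b)}{2}=f\bigl(\tfrac{a+b}{2}\bigr)=f(a\oplus b)=f(a)\oplus f(b)=\tfrac{f(a)+f(b)}{2},
  \]
  where the first equality applies the previous step to the element $a+b$.
  \item Doubling both sides yields $f(a+b)=f(a)+f(b)$.
\end{itemize}

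No step seems to be a genuine obstacle. The only point needing care is that we halve $a+b$ rather than $a$ and $b$ separately, which requires that $f$ preserves halves; this holds because $f(0)=0$. Preservation of negation is not even needed beyond the fact that it gives $f(0)=0$.
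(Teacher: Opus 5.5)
Your proof is correct and is essentially the paper's argument: the paper also reduces to showing that $f$ preserves addition, obtaining it from the chain $f(a+b)\oplus 0=f((a+b)\oplus 0)=f(a\oplus b)=f(a)\oplus f(b)=(f(a)+f(b))\oplus 0$ (using $f(0)=0$) followed by cancellation, which is exactly your halve-then-double step. For the other direction the paper computes $2f(a\oplus b)=2(f(a)\oplus f(b))$ and appeals to uniqueness of halves, whereas you use $\set D$-linearity directly; the difference is only cosmetic.
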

\begin{proof}
  Let $A$ and $B$ be dy-modules, and $f\colon A\to B$ be a map. As mentioned in Example \ref{eg:cancellative scales} (1), a dy-module is just an abelian group with an extra property. It suffices, therefore, to show that $f$ preserves midpoints if and only if it preserves addition. If $f$ preserves addition, then for all $a,b\in A$,\[
    2f(a\oplus b) = f(2(a\oplus b)) = f(a+b) = f(a)+f(b) = 2(f(a)\oplus f(b)),
  \]and thus $f(a\oplus b)=f(a)\oplus f(b)$. Conversely, if $f$ preserves midpoints, then for all $a,b\in A$,\begin{multline*}
    f(a+b)\oplus 0 = f(a+b)\oplus f(0) = f((a+b)\oplus 0) \\
    {} = f(a\oplus b) = f(a)\oplus f(b) = (f(a)+f(b))\oplus 0,
  \end{multline*}and thus $f(a+b)=f(a)+f(b)$. Hence we conclude that $f$ is a scale homomorphism if and only if it is a dy-module homomorphism.
\end{proof}

We write $\set D\categoryMod$ for the category whose objects are dy-modules and morphisms are dy-module homomorphism, and $\category{Scl}_c$ for the full subcategory of $\category{Scl}$ whose objects are cancellative scales. From the fact that every dy-module is a cancellative scale and Proposition \ref{prop:homomorphisms between dy-modules}, we obtain a fully faithful functor\[
  \begin{tikzcd}
    \set D\categoryMod \arrow[hook, r] & \category{Scl}_c,
  \end{tikzcd}
\]through which we can consider $\set D\categoryMod$ as a full subcategory of $\category{Scl}_c$. This functor has a left adjoint $E$, which sends each cancellative scale to its enveloping dy-module:\[
  \begin{tikzcd}
    \set D\categoryMod \arrow[hook, r, shift right=2] & \category{Scl}_c. \arrow["E"', "\bot", l, shift right=2]
  \end{tikzcd}
\]In other words, $\set D\categoryMod$ is a \emph{reflective subcategory} of $\category{Scl}_c$ with reflector $E$.

Of course, there are also \emph{non-cancellative} scales. Note that cancellativity is not written in pure equations, but in a Horn formula. In fact, the category $\category{Scl}_c$ is a \emph{quasivariety}, but not a variety: it is closed under taking subscales and direct products, but not under taking homomorphic images. (On quasivarieties, see \cite[Chapter V, Section 2]{BS12} for instance.) Here, we give an example of a non-cancellative scale which is a homomorphic image of a cancellative scale.

\begin{eg}
  \label{eg:three-element semilattice}
  The set $\{-1,0,1\}$ is a dy-module under addition modulo $3$, but there is another way to make this set into a scale. Define an order on $\{-1,0,1\}$ so that $0$ is the least, and $1$ and $-1$ are not comparable, as in the diagram\[
    \begin{tikzcd}[sep=1em]
      -1 \arrow[dr, dash] & & 1. \arrow[dl, dash] \\
      & 0 & \phantom{-1}
    \end{tikzcd}
  \]Then $\{-1,0,1\}$ is a meet-semilattice, and furthermore, a scale when we define midpoints as meets, and negation and zero as they are. (In particular, the mediality follows from the commutativity and associativity of the meet operation.) This scale is not cancellative; for instance,\[
    0\wedge 0 = 0 = 1\wedge 0 \quad \text{but} \quad 0 \ne 1.
  \]Now, define a map $f\colon[-1,1]\to\{-1,0,1\}$ by\[
    f(a) := \begin{cases}
      a & \text{if }a=\pm 1; \\
      0 & \text{otherwise}.
    \end{cases}
  \]Then $f$ is a scale homomorphism; indeed, for all $a,b\in[-1,1]$,\begin{align*}
    f(a\oplus b) = f(a)\oplus f(b) &= \begin{cases}
      a & \text{if }(a,b)=(-1,-1)\text{ or }(a,b)=(1,1); \\
      0 & \text{otherwise};
    \end{cases} \\
    f(-a) = -f(a) &= \begin{cases}
      -a & \text{if }a=\pm 1; \\
      0 & \text{otherwise}.
    \end{cases}
  \end{align*}The scale $[-1,1]$ is cancellative, while its image $\{-1,0,1\}$ under $f$ is not cancellative. This shows that homomorphic images of cancellative scales are not necessarily cancellative.
\end{eg}

\section{Congruences and cancellativation}
\label{sec:congruences and cancellativation}

\begin{dfn}
  A \demph{congruence} on a scale $A$ is an equivalence relation $\sim$ on $A$ such that:\begin{itemize}
    \item (compatibility with $\oplus$) if $a\sim b$ and $c\sim d$, then $a\oplus c\sim b\oplus d$ for all $a,b,c,d\in A$;
    \item (compatibility with $-$) if $a\sim b$, then $(-a)\sim(-b)$ for all $a,b\in A$.
  \end{itemize}
\end{dfn}

\begin{dfn}
  Let $A$ be a scale, and $\sim$ be a congruence on it. The \demph{quotient scale} of $A$ by $\sim$ is the quotient set $A/{\sim}$ together with the fundamental operations defined so that the quotient map $A\twoheadrightarrow A/{\sim}$ is a scale homomorphism; that is, we define:\begin{itemize}
    \item $(a/{\sim})\oplus(b/{\sim}):=(a\oplus b)/{\sim}$ for all $a,b\in A$;
    \item $-(a/{\sim}):=(-a)/{\sim}$ for all $a\in A$;
    \item $0:=0/{\sim}$.
  \end{itemize}Here, $a/{\sim}$ denotes the equivalence class of $a$ under $\sim$. (The right-hand sides do not depend on the choice of the representative elements $a$ and $b$ by the compatibility of $\sim$ with $\oplus$ and $-$.)
\end{dfn}

In group theory, we rarely refer to congruences on a group, because they are in one-to-one correspondence with its normal subgroups (\cite[Theorem 1.21]{Ber11}). In particular, congruences on an abelian group are in one-to-one correspondence with its (arbitrary) subgroups, as follows. \begin{prop}
  \label{prop:congruences and submodules}
  Let $A$ be an abelian group. Then:\begin{enumerate}[label=(\arabic*)]
    \item for every congruence $\sim$ on $A$, the equivalence class $0/{\sim}$ is a subgroup of $A$;
    \item for every subgroup $S\subset A$, the relation $\equiv_{\mod S}$ on $A$ such that\[
      a \equiv_{\mod S} b \quad \text{if and only if} \quad a+(-b) \in S
    \]is a congruence;
    \item the two maps\[
      \begin{array}{cccc}
        & \{\text{subgroups of }A\} & \leftrightarrows & \{\text{congruences on }A\} \\
        & \rotatebox{90}{$\in$} & & \rotatebox{90}{$\in$} \\
        & 0/{\sim} & \reflectbox{$\mapsto$} & \sim \\
        \text{and} & S & \mapsto & \equiv_{\mod S}
      \end{array}
    \]are inverse of each other and preserve inclusion.
  \end{enumerate}
\end{prop}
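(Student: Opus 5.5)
The plan is to verify the three parts in order, using only the abelian group axioms and the definition of a congruence. Here a congruence on an abelian group means an equivalence relation compatible with $+$ and $-$.

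For (1), let $S:=0/{\sim}$. Then $0\in S$ by reflexivity. If $a,b\in S$, then $a\sim 0$ and $b\sim 0$, so compatibility with $+$ gives $a+b\sim 0+0=0$. Compatibility with $-$ gives $-a\sim -0=0$. Hence $S$ is a subgroup.

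For (2), reflexivity, symmetry and transitivity of $\equiv_{\mod S}$ follow from the facts that $0\in S$, that $S$ is closed under negation ($b-a=-(a-b)$), and that $S$ is closed under addition ($a-c=(a-b)+(b-c)$). For compatibility, suppose $a-b\in S$ and $c-d\in S$. Commutativity gives $(a+c)-(b+d)=(a-b)+(c-d)\in S$. Likewise $(-a)-(-b)=-(a-b)\in S$.

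For (3), one composite is immediate: $0/{\equiv_{\mod S}}=\{a\mid a-0\in S\}=S$. The other composite is the only step with any content. Given a congruence $\sim$, I will show that $a\sim b$ if and only if $a-b\sim 0$. If $a\sim b$, compatibility with $+$ applied to $a\sim b$ and $-b\sim -b$ gives $a+(-b)\sim b+(-b)=0$. Conversely, if $a-b\sim 0$, adding $b\sim b$ gives $a\sim b$. Thus $\sim$ coincides with $\equiv_{\mod 0/{\sim}}$.

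Both maps preserve inclusion directly from the formulas. If $S\subset T$, then $a-b\in S$ implies $a-b\in T$. If ${\sim}\subset{\approx}$, then $0/{\sim}\subset 0/{\approx}$. I expect no real obstacle; the only point needing care is the translation argument in (3), which uses the group inverse to move $b$ across the relation.
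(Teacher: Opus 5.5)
Your proof is correct. The paper gives no proof of this proposition: it presents it as the standard group-theoretic fact, pointing to Bergman's Theorem 1.21. Your element-wise verification follows the same template the paper uses for the scale analogues, Propositions \ref{prop:congruences and subscales} and \ref{prop:cancellative congruences and doublable subscales:inverses of each other}. Your step in (3), recovering $a\sim b$ from $a-b\sim 0$ by adding $b\sim b$, is exactly what is unavailable for scales, where the paper uses cancellativity instead. For (1), the paper's scale version also uses a slicker observation: $0/{\sim}$ is the preimage of the trivial subalgebra under the quotient homomorphism.
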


Now, let us take $A$ to be a scale. The first question is:\begin{quote}
  are Proposition \ref{prop:congruences and submodules} (1) and (2) still true even when we replace “subgroup” with “subscale” and $+$ with $\oplus$?
\end{quote}The answer is yes for (1), while it is no for (2). For instance, take $A=[-2,2]\subset\set R$ and $S=[-1,1]$. Then the relation $\equiv$ on $A$ such that\[
  a\equiv b \quad \text{if and only if} \quad a\oplus(-b) \in S
\]is not transitive. Indeed:\begin{itemize}
  \item $2\equiv 0\equiv(-2)$ because $2\oplus(-0)=0\oplus(-(-2))=1\in S$;
  \item $2\not\equiv(-2)$ because $2\oplus(-(-2))=2\not\in S$.
\end{itemize}We can, however, also make (2) hold if we modify the definition of $\equiv_{\mod S}$.

\begin{ntn}
  Given an integer $n\ge 0$ and an element $a$ of a scale, we use the shorthand\[
    \frac{a}{2^n} := (\cdots((a\!\underbrace{{}\oplus 0)\oplus 0)\cdots\oplus 0)}_{n\text{ times}}.
  \]
\end{ntn}

\begin{prop}
  \label{prop:congruences and subscales}
  Let $A$ be a scale. Then:\begin{enumerate}[label=(\arabic*)]
    \item for every congruence $\sim$ on $A$, the equivalence class $0/{\sim}$ is a subscale of $A$;
    \item for every subscale $S\subset A$, then the relation $\equiv_{\mod S}$ on $A$ such that\[
      a \equiv_{\mod S} b \quad \text{if and only if} \quad \frac{a+(-b)}{2^n} \in S \text{ for some integer } n\ge 0
    \]is a congruence.
  \end{enumerate}
\end{prop}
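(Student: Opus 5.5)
Throughout I read the expression $\frac{a+(-b)}{2^n}$ in (2) as $\frac{a\oplus(-b)}{2^n}$, which is the intended scale-theoretic analogue of $a+(-b)$. For (1) I would verify the three subscale conditions of Definition \ref{dfn:subscales} directly from the congruence axioms. First, $0\in 0/{\sim}$ by reflexivity. Next, if $a\sim 0$ and $c\sim 0$, then compatibility with $\oplus$ gives $a\oplus c\sim 0\oplus 0=0$ by \ref{scale axiom:idempotence}. Finally, if $a\sim 0$, then compatibility with $-$ gives $-a\sim -0=0$ by Proposition \ref{prop:-0=0}.

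For (2) I would first record two auxiliary facts about the operation $x\mapsto x/2^n$, both proved by induction on $n$.
\begin{itemize}
  \item[(i)] $\frac{x}{2^n}\oplus\frac{y}{2^n}=\frac{x\oplus y}{2^n}$ and $-\frac{x}{2^n}=\frac{-x}{2^n}$. The first follows by writing $0=0\oplus 0$ and applying \ref{scale axiom:mediality}; the second follows from \ref{scale axiom:-(a+b)=(-a)+(-b)} together with $-0=0$.
  \item[(ii)] If $\frac{x}{2^n}\in S$, then $\frac{x}{2^{k}}\in S$ for every $k\ge n$, since $0\in S$ and $S$ is closed under $\oplus$.
\end{itemize}
As a consequence of (ii), two witnesses $n$ and $m$ can always be replaced by the common witness $N=\max\{n,m\}$.

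Next I would prove that $\equiv_{\mod S}$ is an equivalence relation. Reflexivity holds because $a\oplus(-a)=0\in S$, so $n=0$ is a witness. For symmetry, axioms \ref{scale axiom:-(a+b)=(-a)+(-b)}, \ref{scale axiom:-(-a)=a} and \ref{scale axiom:commutativity} give $b\oplus(-a)=-(a\oplus(-b))$. By (i) and the closure of $S$ under $-$, the same $n$ then works for the pair $(b,a)$. For transitivity, put $x=a\oplus(-b)$ and $y=b\oplus(-c)$. Using \ref{scale axiom:mediality}, \ref{scale axiom:commutativity} and \ref{scale axiom:negation}, I get
\[
  x\oplus y=(a\oplus b)\oplus((-b)\oplus(-c))=(a\oplus(-c))\oplus(b\oplus(-b))=\frac{a\oplus(-c)}{2}.
\]
Hence, with the common witness $N$, fact (i) gives
\[
  \frac{a\oplus(-c)}{2^{N+1}}=\frac{x}{2^N}\oplus\frac{y}{2^N}\in S.
\]
I expect this identity, which turns the ``error'' $a\oplus(-c)$ into a midpoint of the two given errors at the cost of one extra halving, to be the main step. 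It also explains why the unmodified relation fails to be transitive while the $2^n$-version works.

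Finally I would check compatibility. Compatibility with $-$ follows because $(-a)\oplus(-(-b))=-(a\oplus(-b))$, so the argument for symmetry applies. For compatibility with $\oplus$, suppose $a\equiv b$ and $c\equiv d$. Axioms \ref{scale axiom:-(a+b)=(-a)+(-b)} and \ref{scale axiom:mediality} give
\[
  (a\oplus c)\oplus(-(b\oplus d))=(a\oplus(-b))\oplus(c\oplus(-d)).
\]
Taking a common witness $N$ and applying (i), the element $\frac{(a\oplus c)\oplus(-(b\oplus d))}{2^N}$ is the midpoint of two elements of $S$, and so it lies in $S$. This shows that $\equiv_{\mod S}$ is a congruence.
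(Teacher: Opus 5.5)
Your proof is correct and takes the same route as the paper. For (2) you equalize the witnesses and use the same identities, notably $\frac{a\oplus(-c)}{2^{n+1}}=\frac{a\oplus(-b)}{2^n}\oplus\frac{b\oplus(-c)}{2^n}$, and you state facts (i) and (ii) explicitly where the paper uses them silently; for (1) the paper's argument that $0/{\sim}$ is the preimage of the subscale $\{0\}$ under the quotient homomorphism is just a compact form of your direct check.
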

\begin{proof}
  (1) The equivalence class $0/{\sim}$ is the inverse image of $\{0\}\subset A/{\sim}$ under the quotient map $A\twoheadrightarrow A/{\sim}$, which is a scale homomorphism. Since $\{0\}$ is a subscale of $A/{\sim}$, it follows that $0/{\sim}$ is a subscale of $A$.

  (2) For all $a,b,c,d\in A$:\begin{itemize}
    \item (reflexivity) $a\oplus(-a)=0\in S$;
    \item (symmetry) if $(a\oplus(-b))/2^n\in S$ for some integer $n\ge 0$, then\[
      \frac{b\oplus(-a)}{2^n} = -\frac{a\oplus(-b)}{2^n} \in S;
    \]
    \item (transitivity) if $(a\oplus(-b))/2^m,(b\oplus(-c))/2^n\in S$ for some integers $m,n\ge 0$, dividing one of them by two several more times we may assume that $m=n$, and then\[
      \frac{a\oplus(-c)}{2^{n+1}} = \frac{a\oplus(-b)}{2^n}\oplus\frac{b\oplus(-c)}{2^n} \in S;
    \]
    \item (compatibility with $\oplus$) if $(a\oplus(-b))/2^m,(c\oplus(-d))/2^n\in S$ for some integers $m,n\ge 0$, we may assume that $m=n$ similarly to above, and then\[
      \frac{(a\oplus c)\oplus(-(b\oplus d))}{2^n} = \frac{a\oplus(-b)}{2^n}\oplus\frac{c\oplus(-d)}{2^n} \in S;
    \]
    \item (compatibility with $-$) if $(a\oplus(-b))/2^n\in S$ for some integer $n\ge 0$, then\[
      \frac{(-a)\oplus(-(-b))}{2^n} = -\frac{a\oplus(-b)}{2^n} \in S.
    \]
  \end{itemize}Hence $\equiv_{\mod S}$ is a congruence on $A$.
\end{proof}

By Proposition \ref{prop:congruences and subscales}, we obtain two maps\begin{equation}
  \begin{array}{cccc}
    & \{\text{subscales of }A\} & \leftrightarrows & \{\text{congruences on }A\} \\
    & \rotatebox{90}{$\in$} & & \rotatebox{90}{$\in$} \\
    & 0/{\sim} & \reflectbox{$\mapsto$} & \sim \\
    \text{and} & S & \mapsto & \equiv_{\mod S},
  \end{array} \label{eq:congruences and subscales:one-to-one maps}
\end{equation}which obviously preserve inclusion by the definition of $0/{\sim}$ and $\equiv_{\mod S}$. The next question is:\begin{quote}
  are they inverses of each other?
\end{quote}The answer is again no, but it turns yes if we add some constraints on congruences and subscales.

\begin{dfn}
  Let $A$ be a scale.\begin{itemize}
    \item A congruence $\sim$ on $A$ is \demph{cancellative} if the quotient scale $A/{\sim}$ is cancellative, that is, $a\oplus c\sim b\oplus c$ implies $a\sim b$ for all $a,b,c\in A$.
    \item A subscale $S\subset A$ is \demph{doublable} if $a/2\in S$ implies $a\in S$ for all $a\in A$.
  \end{itemize}
\end{dfn}

\begin{prop}
  \label{prop:cancellative congruences and doublable subscales}
  Let $A$ be a scale. Then:\begin{enumerate}[label=(\arabic*)]
    \item for every cancellative congruence $\sim$ on $A$, the subscale $0/{\sim}\subset A$ is doublable;
    \item for every subscale $S\subset A$ (which is not necessarily doublable), the congruence $\equiv_{\mod S}$ on $A$ is cancellative.
  \end{enumerate}
\end{prop}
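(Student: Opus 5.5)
For part (1), the plan is to unwind the definitions directly. Suppose $\sim$ is a cancellative congruence and $a/2\in 0/{\sim}$, i.e.\ $a\oplus 0\sim 0$. By \ref{scale axiom:idempotence} we have $0=0\oplus 0$, so the hypothesis reads $a\oplus 0\sim 0\oplus 0$. Cancellativity of $\sim$, applied with $c=0$, gives $a\sim 0$, that is, $a\in 0/{\sim}$. We already know from Proposition \ref{prop:congruences and subscales} (1) that $0/{\sim}$ is a subscale, so it is doublable.

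For part (2), let $S$ be any subscale and suppose $a\oplus c\equiv_{\mod S} b\oplus c$. By definition this means
\[
\frac{(a\oplus c)\oplus(-(b\oplus c))}{2^n}\in S
\]
for some $n\ge 0$. The key step is a single computation showing that the element $(a\oplus c)\oplus(-(b\oplus c))$ equals $(a\oplus(-b))/2$, so that $c$ drops out.

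\begin{itemize}
  \item First apply \ref{scale axiom:-(a+b)=(-a)+(-b)} to rewrite $-(b\oplus c)$ as $(-b)\oplus(-c)$.
  \item Then mediality \ref{scale axiom:mediality} turns $(a\oplus c)\oplus((-b)\oplus(-c))$ into $(a\oplus(-b))\oplus(c\oplus(-c))$.
  \item Finally \ref{scale axiom:negation} turns this into $(a\oplus(-b))\oplus 0=(a\oplus(-b))/2$.
\end{itemize}

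By the definition of the shorthand, dividing this by $2^n$ gives exactly $(a\oplus(-b))/2^{n+1}$. The hypothesis therefore says $(a\oplus(-b))/2^{n+1}\in S$, which is precisely $a\equiv_{\mod S} b$ with exponent $n+1$. Hence $\equiv_{\mod S}$ is cancellative. This is also where the ``for some $n$'' in the definition does its work, absorbing the extra halving, so no doublability of $S$ is needed.

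I expect no real obstacle. The only point needing care is the mediality rearrangement in part (2), which cancels the common term $c$ against $-c$. Also, the ``$+$'' in the definition of $\equiv_{\mod S}$ should be read as $\oplus$, as in the proof of Proposition \ref{prop:congruences and subscales}.
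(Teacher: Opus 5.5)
Your proof is correct and follows the paper's argument essentially verbatim. Part (1) is the same application of cancellativity to $a\oplus 0\sim 0\oplus 0$. Part (2) rests on the same identity $\frac{(a\oplus c)\oplus(-(b\oplus c))}{2^n}=\frac{a\oplus(-b)}{2^{n+1}}$, which you justify explicitly via (S5), mediality (S3) and (S4), whereas the paper leaves that step implicit. Your reading of the $+$ in the definition of $\equiv_{\mod S}$ as $\oplus$ also matches how the paper uses it in its proofs.
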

\begin{proof}
  (1) Let $a\in A$. Then $a\oplus 0\sim 0\oplus 0$ implies $a\sim 0$ by the cancellativity of $\sim$, which exactly means that $a/2\in 0/{\sim}$ implies $a\in 0/{\sim}$. Hence $0/{\sim}$ is doublable.

  (2) Let $a,b,c\in A$. If $a\oplus c\equiv_{\mod S}b\oplus c$, then there is an integer $n\ge 0$ such that\[
    S \ni \frac{(a\oplus c)\oplus(-(b\oplus c))}{2^n} = \frac{a\oplus(-b)}{2^{n+1}},
  \]and thus $a\equiv_{\mod S}b$. Hence $\equiv_{\mod S}$ is cancellative.
\end{proof}

\begin{prop}
  \label{prop:cancellative congruences and doublable subscales:inverses of each other}
  Let $A$ be a scale. Then:\begin{enumerate}[label=(\arabic*)]
    \item ${\sim}\subseteq{\equiv_{\mod 0/{\sim}}}$ for every congruence $\theta$ on $A$, with equality if $\theta$ is cancellative;
    \item $S\subseteq 0/{\equiv_{\mod S}}$ for every subscale $S\subset A$, with equality if $S$ is doublable.
  \end{enumerate}
\end{prop}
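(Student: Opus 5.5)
We prove each inclusion directly from the definitions, and then use cancellativity (respectively doublability) to strip the halvings that appear in $\equiv_{\mod S}$. Throughout, the statement's $\theta$ is read as the congruence $\sim$. Also, the expression $a+(-b)$ in the definition of $\equiv_{\mod S}$ is read as $a\oplus(-b)$, as in the proof of Proposition \ref{prop:congruences and subscales}.

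We first record a basic identity: for any scale homomorphism (in particular the quotient map $A\twoheadrightarrow A/{\sim}$), halving is preserved, since $(x\oplus 0)/{\sim}=(x/{\sim})\oplus 0$. Hence $\frac{x}{2^n}\sim 0$ if and only if $\frac{x/{\sim}}{2^n}=0$ in $A/{\sim}$.

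For (1), suppose $a\sim b$. Then $a\oplus(-b)\sim b\oplus(-b)=0$ by compatibility with $\oplus$ and $-$ together with \ref{scale axiom:negation}. So $a\oplus(-b)\in 0/{\sim}$, and taking $n=0$ gives $a\equiv_{\mod 0/{\sim}}b$.

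For the reverse inclusion when $\sim$ is cancellative, suppose $\frac{a\oplus(-b)}{2^n}\sim 0$ for some $n$. Since $0=\frac{0}{2^n}$ by idempotence, this says $\frac{a\oplus(-b)}{2^{n-1}}\oplus 0\sim 0\oplus 0$. Cancellativity then strips one halving at a time, and by induction on $n$ we reach $a\oplus(-b)\sim 0$.

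The remaining step, which I expect to be the main obstacle, is to deduce $a\sim b$ from $a\oplus(-b)\sim 0$ in the cancellative case. We have $a\oplus(-b)\sim 0=b\oplus(-b)$, so cancelling the common summand $-b$ (using commutativity to place it appropriately) gives $a\sim b$. This is just cancellativity of $A/{\sim}$ applied to $(a/{\sim})\oplus(-b/{\sim})=(b/{\sim})\oplus(-b/{\sim})$.

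For (2), let $s\in S$. We need $s\oplus(-0)=s\oplus 0=\frac{s}{2}\in S$, which holds because $S$ contains $0$ and is closed under $\oplus$. So $s\equiv_{\mod S}0$, witnessed with $n=1$ (indeed even with $n=0$). This gives $S\subseteq 0/{\equiv_{\mod S}}$.

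Conversely, suppose $S$ is doublable and $a\equiv_{\mod S}0$. Then $\frac{a\oplus 0}{2^n}=\frac{a}{2^{n+1}}\in S$ for some $n$. Applying doublability $n+1$ times, by induction, yields $a\in S$. This proves the equality, and the argument is routine once the reading of $a+(-b)$ as $a\oplus(-b)$ is fixed.
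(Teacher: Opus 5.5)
Your proof is correct and takes the same route as the paper. In (1) you pass from $a\sim b$ to $a\oplus(-b)\sim b\oplus(-b)=0$, and for the converse you use cancellativity of $\sim$ to strip the $\oplus 0$'s and then the common $-b$; in (2) you use $s\oplus 0\in S$, and for the converse you apply doublability $n+1$ times to $a/2^{n+1}\in S$ (your ``$n=1$'' witness is harmless, since the $n=0$ witness $s\oplus(-0)\in S$ that you actually checked already suffices).
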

\begin{proof}
  (1) For all $a,b\in A$,\begin{align}
    a\sim b \ \ \Longrightarrow &\ \frac{a\oplus(-b)}{2^n}\sim\frac{b\oplus(-b)}{2^n}(=0)\text{ for some integer }n\ge 0 \label{eq:congruences and subscales:inverses of each other:01} \\
    \Longleftrightarrow &\ \frac{a\oplus(-b)}{2^n}\in 0/{\sim}\text{ for some integer }n\ge 0 \notag \\
    \Longleftrightarrow &\ a\equiv_{\mod 0/{\sim}}b, \notag
  \end{align}and the converse of \eqref{eq:congruences and subscales:inverses of each other:01} also holds if $\sim$ is cancellative. Hence ${\sim}\subseteq{\equiv_{\mod 0/{\sim}}}$, with equality if $\sim$ is cancellative.
  
  (2) For all $a\in A$,\begin{align}
    a\in S \ \ \Longrightarrow &\ \frac{a\oplus 0}{2^n}\in S\text{ for some integer }n\ge 0 \label{eq:congruences and subscales:inverses of each other:02} \\
    \Longleftrightarrow &\ a\equiv_{\mod S}0 \notag \\
    \Longleftrightarrow &\ a\in 0/{\equiv_{\mod S},} \notag
  \end{align}and the converse of \eqref{eq:congruences and subscales:inverses of each other:02} also holds if $S$ is doublable. Hence $S\subseteq 0/{\equiv_{\mod S}}$, with equality if $S$ is doublable.
\end{proof}

By Proposition \ref{prop:cancellative congruences and doublable subscales}, we can restrict the maps in \eqref{eq:congruences and subscales:one-to-one maps} to obtain two new maps\[
  \begin{array}{cccc}
    & \left\{\begin{gathered} \text{doublable} \\ \text{subscales of }A \end{gathered}\right\} & \leftrightarrows & \left\{\begin{gathered} \text{cancellative} \\ \text{congruences on }A \end{gathered}\right\} \\
    & \rotatebox{90}{$\in$} & & \rotatebox{90}{$\in$} \\
    & 0/{\sim} & \reflectbox{$\mapsto$} & \sim \\
    \text{and} & S & \mapsto & \equiv_{\mod S},
  \end{array}
\]which are inverses of each other by Proposition \ref{prop:cancellative congruences and doublable subscales:inverses of each other}. Furthermore, the latter can be factored as\[
  \left\{\begin{gathered} \text{doublable} \\ \text{subscales of }A \end{gathered}\right\} \hookrightarrow \{\text{subscales of }A\} \twoheadrightarrow \left\{\begin{gathered} \text{cancellative} \\ \text{congruences on }A \end{gathered}\right\}.
\]

Now, we explain how to make a “universal” cancellative scale from a general scale.

\begin{dfn}
  A \demph{cancellativation} of a scale $A$ is a cancellative scale $C(A)$ together with a scale homomorphism $\varepsilon\colon A\to C(A)$ that has the following universality:\begin{quote}
    for any cancellative scale $B$ together with a scale homomorphism $f\colon A\to B$, there is a unique scale homomorphism $\overline f\colon C(A)\to B$ such that the triangle\[
      \begin{tikzcd}
        A \arrow["f", r] \arrow["\varepsilon"', d] & B \\
        C(A) \arrow["\overline f"', dashed, ur]
      \end{tikzcd}
    \]commutes.
  \end{quote}
\end{dfn}

\begin{thm}
  Let $A$ be a scale. Then the quotient scale $A/{\equiv_{\mod\{0\}}}$, together with the quotient map $\varepsilon\colon A\twoheadrightarrow A/{\equiv_{\mod\{0\}}}$, is a cancellativation of $A$.
\end{thm}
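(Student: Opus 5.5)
The plan has two parts: first show that $A/{\equiv_{\mod\{0\}}}$ is cancellative, then verify the universal property.

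\textbf{Cancellativity.} Note that $\{0\}$ is a subscale of $A$, since $0\oplus 0=0$ and $-0=0$ by Proposition~\ref{prop:-0=0}. So $\equiv_{\mod\{0\}}$ is a congruence by Proposition~\ref{prop:congruences and subscales}~(2). It is cancellative by Proposition~\ref{prop:cancellative congruences and doublable subscales}~(2). Hence the quotient $A/{\equiv_{\mod\{0\}}}$ is a cancellative scale, and the quotient map $\varepsilon$ is a scale homomorphism by construction.

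\textbf{Universal property.} Let $B$ be cancellative and $f\colon A\to B$ a scale homomorphism. Since $\varepsilon$ is surjective, any $\overline f$ with $\overline f\circ\varepsilon=f$ is forced to be $\overline f(a/{\equiv})=f(a)$, so uniqueness is immediate. For existence it suffices to show that $a\equiv_{\mod\{0\}}b$ implies $f(a)=f(b)$. Once that holds, $\overline f$ is well defined, and it is a homomorphism because the operations on the quotient are defined via representatives.

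\textbf{The key step.} Suppose $(a\oplus(-b))/2^n=0$ for some $n$. Applying $f$, and using that $f$ preserves $\oplus$, $-$ and $0$, we get $(f(a)\oplus(-f(b)))/2^n=0$ in $B$. Now use cancellativity of $B$ $n$ times: from $x\oplus 0=0\oplus 0$ deduce $x=0$. This gives $f(a)\oplus(-f(b))=0=f(b)\oplus(-f(b))$. One more cancellation, after rewriting with commutativity, gives $f(a)=f(b)$.

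Equivalently, one can argue more structurally. The kernel congruence $\ker f$ is cancellative because $B$ is cancellative and $A/\ker f$ embeds in $B$. Then Proposition~\ref{prop:cancellative congruences and doublable subscales:inverses of each other}~(1) gives ${\ker f}={\equiv_{\mod 0/\ker f}}$. Since $\{0\}\subseteq 0/\ker f$ and the assignment $S\mapsto{\equiv_{\mod S}}$ preserves inclusion, we get ${\equiv_{\mod\{0\}}}\subseteq\ker f$.

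The only mildly delicate point is this well-definedness step, in particular iterating cancellation through the $n$ halvings. Everything else is routine.
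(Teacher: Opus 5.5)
Your proposal is correct and follows the paper's proof: cancellativity of the quotient comes from Proposition~\ref{prop:cancellative congruences and doublable subscales}~(2), uniqueness from the surjectivity of $\varepsilon$, and existence from applying $f$ to $(a\oplus(-b))/2^n=0$ and cancelling in $B$. Your explicit $n$-fold cancellation spells out what the paper compresses into ``by the cancellativity of $B$.'' Your alternative argument via $\ker f$ and Proposition~\ref{prop:cancellative congruences and doublable subscales:inverses of each other}~(1) is also valid.
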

\begin{proof}
  The quotient scale $A/{\equiv_{\mod\{0\}}}$ is cancellative by Proposition \ref{prop:cancellative congruences and doublable subscales} (2). To prove the universality, take any cancellative scale $B$ and a scale homomorphism $f\colon A\to B$. Our goal is to construct a unique scale homomorphism $\overline f\colon A/{\equiv_{\mod\{0\}}}\to B$ such that $\overline f(a/{\equiv_{\mod\{0\}}})=f(a)$ for all $a\in A$. The uniqueness of $\overline f$ follows from the surjectivity of the quotient map $\varepsilon$. To prove the existence of $\overline f$, it is necessary and sufficient to show that $a\equiv_{\mod\{0\}}b$ implies $f(a)=f(b)$ for all $a,b\in A$. If $a\equiv_{\mod\{0\}}b$, that is, there is an integer $n\ge 0$ such that\[
    \frac{a\oplus(-b)}{2^n} = 0,
  \]then\[
    \frac{f(a)\oplus(-f(b))}{2^n} = 0
  \]because $f$ is a scale homomorphism, and thus $f(a)=f(b)$ by the cancellativity of $B$. Hence we have obtained the desired scale homomorphism $\overline f\colon A/{\equiv_{\mod\{0\}}}\to B$.
\end{proof}

From a category-theoretic point of view, we can say that the inclusion functor from $\category{Scl}_c$ to $\category{Scl}$ has a left adjoint $C$, which sends each scale to its cancellativation:\[
  \begin{tikzcd}
    \category{Scl}_c \arrow[hook, r, shift right=2] & \category{Scl}. \arrow["\bot", "C"', l, shift right=2]
  \end{tikzcd}
\]In other words, $\bf{Scl}_c$ is a reflective subcategory of $\bf{Scl}$ with reflector $C$.

\section{Free scales}
\label{sec:free scales}

\begin{dfn}
  A \demph{free scale} on a set $J$ is a scale $F(J)$ together with a map $\varepsilon\colon J\to F(J)$ that has the following universality:\begin{quote}
    for every scale $A$ together with a map $f\colon J\to A$, there is a unique scale homomorphism $\overline f\colon F(J)\to A$ such that the triangle\[
      \begin{tikzcd}
        F(J) \arrow["\overline f", dashed, dr] \\
        J \arrow["f"', r] \arrow["\varepsilon", u] & A
      \end{tikzcd}
    \]commutes.
  \end{quote}If $J\subset F(J)$ and $\varepsilon$ is the inclusion map, then we also say that $F(J)$ is \demph{freely generated} by $J$.
\end{dfn}

Since the category $\category{Scl}$ is a variety, it is guaranteed that there exists a free scale on any set, and it is unique up to isomorphism. (For construction of free algebras, see \cite[Chapter II, Section 10]{BS12} for instance.) Then, is there any simple way to represent free scales? For free abelian groups, it is well known that:\begin{prop}
  \label{prop:free abelian groups}
  For each set $J$, the abelian group\[
    \set Z^{\oplus J} := \big\{(x_j)_{j\in J}\in\set Z^J\ \big|\ x_j\ne 0\text{ for only finitely many }j\in J\big\}
  \]is freely generated by $\{\varepsilon_j\mid j\in J\}$, where $\varepsilon_j$ is the point of which the $j$-coordinate is $1$ and all the others are $0$.
\end{prop}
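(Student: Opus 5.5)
We need to show that $\set Z^{\oplus J}$ together with the map $j\mapsto\varepsilon_j$ has the universal property of a free abelian group on $J$. Fix an abelian group $A$ and a map $f\colon J\to A$. We must produce a unique group homomorphism $\overline f\colon\set Z^{\oplus J}\to A$ with $\overline f(\varepsilon_j)=f(j)$ for all $j\in J$. The argument splits into existence and uniqueness, both resting on the finite-support condition.

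\emph{Existence.} Define
\[
  \overline f\big((x_j)_{j\in J}\big) := \sum_{j\in J} x_j f(j),
\]
where $x_j f(j)$ is the $x_j$-fold sum of $f(j)$ (or of $-f(j)$ when $x_j<0$). This sum is well defined because only finitely many $x_j$ are nonzero. Since $A$ is commutative, the order of summation is irrelevant, so the value depends only on the finite support.

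Additivity of $\overline f$ follows from two facts. First, $(x+y)a=xa+ya$ for integers $x,y$ and $a\in A$, which is a standard consequence of the group axioms. Second, sums in a commutative group can be reindexed over the union of the supports of two elements. Since $\varepsilon_j$ has a single nonzero coordinate, equal to $1$ at $j$, we get $\overline f(\varepsilon_j)=f(j)$.

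\emph{Uniqueness.} Every element $x=(x_j)$ of $\set Z^{\oplus J}$ can be written as the finite sum $x=\sum_{j} x_j\varepsilon_j$, taken over its support; this holds coordinatewise. So the $\varepsilon_j$ generate $\set Z^{\oplus J}$ as a group. Any homomorphism $g$ with $g(\varepsilon_j)=f(j)$ preserves finite sums and integer multiples, so $g(x)=\sum x_j f(j)=\overline f(x)$.

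There is no serious obstacle here. The only points needing care are the finiteness of supports, which makes the sums meaningful, and commutativity of $A$, which makes them independent of ordering. Both are routine.
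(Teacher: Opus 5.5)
Your proof is correct and complete. The paper gives no proof of this proposition (it cites it as well known), but your argument is the standard one and has the same shape as the paper's proof of its scale analogue, Theorem~\ref{thm:free scales}: uniqueness is forced by writing $x=\sum_j x_j\varepsilon_j$ over the finite support, and existence comes from defining $\overline f(x):=\sum_j x_jf(j)$ and checking that it is a homomorphism.

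Two cosmetic points. For $x_j<0$, the term $x_jf(j)$ should be the $|x_j|$-fold sum of $-f(j)$. Also, since $j\mapsto\varepsilon_j$ is injective, freeness on $J$ in your sense is the same as being freely generated by the subset $\{\varepsilon_j\mid j\in J\}$, which is how the statement is phrased.
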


If a subscale of $\set R$ contains a single element $1$, then it must contain\[
  0,\ -1,\ \pm\frac{1}{2} = \pm(0\oplus 1),\ \pm\frac{1}{4} = \pm\bigg(0\oplus\frac{1}{2}\bigg),\ \pm\frac{3}{4} = \pm\bigg(\frac{1}{2}\oplus 1\bigg),\ ...,
\]and thus, all dyadic rational numbers between $-1$ and $1$. Freyd used the symbol $\set I$ for the set of these numbers. Clearly $\set I$ is a subscale of $\set D$, and we can conjecture that the scale $\set I$ is freely generated by a single element $1$.

Is Proposition \ref{prop:free abelian groups} still true even when we replace “abelian group” with “scale” and $\set Z$ with $\set I$? If a subscale of $\set R^2$ contains two points $(1,0)$ and $(0,1)$, then it must contain\[
  (0,0),\ (-1,0),\ (0,-1),\ \bigg({\pm\frac{1}{2}},0\bigg),\ \bigg(0,\pm\frac{1}{2}\bigg),\ \bigg({\pm\frac{1}{2}},\pm\frac{1}{2}\bigg),\ \bigg({\pm\frac{1}{2}},\mp\frac{1}{2}\bigg),\ ...,
\]and thus, all points in the square defined by the inequality $|x|+|y|\le 1$ whose coordinates are both dyadic rational, as in Figure \ref{fig:l1-unit square}. To generalize this, we can conjecture that for each set $J$, the set\[
  F(J) := \Big\{(x_j)_{j\in J}\in\set I^J\ \Big|\ x_j\ne 0\text{ for only finitely many }j\in J\text{ and }\sum_{j\in J}|x_j|\le 1\Big\}
\]is a subscale of $\set I^J$, and this scale is freely generated by $\{\varepsilon_j\mid j\in J\}$ (where $\varepsilon_j$ is the same as in Proposition \ref{prop:free abelian groups}). Here, we should note that there is an extra condition\[
  \sum_{j\in J}|x_j| \le 1,
\]which was absent for free abelian groups. The left-hand side is known as the \emph{$L^1$-norm} of the point $(x_j)_{j\in J}$ (\cite[Chapter III, 1.9 Example]{Con07} with $p=1$). By the definition of norms, it is easy to see that $F(J)$ is a subscale of $\set I^J$. \begin{figure}
  \centering
  \begin{tikzpicture}
      \filldraw[fill=lightgray] (1,0)--(0,1)--(-1,0)--(0,-1)--cycle;
      \draw[->] (-1.5,0)--(1.5,0) node[right]{$x$};
      \draw[->] (0,-1.5)--(0,1.5) node[above]{$y$};
      \fill (-1,0) circle(2pt) node[below left]{$-1$};
      \fill (-0.5,-0.5) circle(2pt);
      \fill (-0.5,0) circle(2pt);
      \fill (-0.5,0.5) circle(2pt);
      \fill (0,-1) circle(2pt) node[below left]{$-1$};
      \fill (0,-0.5) circle(2pt);
      \fill (0,0) circle(2pt) node[below left]{$\mathrm O$};
      \fill (0,0.5) circle(2pt);
      \fill (0,1) circle(2pt) node[above left]{$1$};
      \fill (0.5,-0.5) circle(2pt);
      \fill (0.5,0) circle(2pt);
      \fill (0.5,0.5) circle(2pt);
      \fill (1,0) circle(2pt) node[below right]{$1$};
    \end{tikzpicture}
  \caption{The square in $\set R^2$ defined by the inequality $|x|+|y|\le 1$.}
  \label{fig:l1-unit square}
\end{figure}

How should we prove that the scale $F(J)$ is free? Using summation, we can express each point $x=(x_j)_{j\in J}\in F(J)$ also as\[
  x = \sum_{j\in J}x_j\varepsilon_j.
\]Thus, given a scale $A$ and a map $f\colon J\to A$, we wish to define a map $\overline f\colon F(J)\to A$ by\[
  \overline f(x) := \text{“}\sum_{j\in J}x_jf(j).\text{”}
\]If $A$ is cancellative, then we can take in its enveloping dy-module the sum in the right-hand side, and it proves to be again in $A$. If $A$ is not cancellative, however, then the sum does not make sense. To solve this problem, we introduce \emph{formal sums} and their quotients by powers of two.

\begin{dfn}
  Let $A$ be a set, and fix a free commutative monoid on $A$. When we refer to each $a\in A$ as a generator, we write $[a]$ for it. The \demph{formal sum} of $n$ elements $a_1,a_2,...,a_n\in A$ is the sum $[a_1]+[a_2]+\cdots+[a_n]$ in that monoid.
\end{dfn}

\begin{dfn}
  \label{dfn:quotients of formal sums}
  Let $A$ be a scale, $n\ge 0$ be an integer, and $S$ be a formal sum of $2^n$ or less elements of $A$ (which are not necessarily distinct). We define $S/2^n$ recursively as follows:
  \begin{enumerate}[label=(\arabic*)]
    \item for $n=0$, we define\[
      \frac{S}{2^0} := \begin{cases}
        0 & \text{if $S=0$ (the empty sum)}; \\
        a & \text{if $S=[a]$ for some $a\in A$};
      \end{cases}
    \]
    \item for $n\ge 1$, splitting $S$ into two formal sums $S_1$ and $S_2$ of $2^{n-1}$ or less elements, we define\begin{equation}
      \frac{S}{2^n} := \frac{S_1}{2^{n-1}}\oplus\frac{S_2}{2^{n-1}}. \label{eq:quotients of formal sums:recursive step}
    \end{equation}
  \end{enumerate}
\end{dfn}

Of course, we have to check that the right-hand side of \eqref{eq:quotients of formal sums:recursive step} does not depend on the choice of $S_1$ and $S_2$. To do so, we use the following lemma: \begin{lem}
  \label{lem:quotients of formal sums:filling 0}
  Assume that division by $2^{n-1}$ is already defined. Let $T$ be a formal sum of $l$ elements of $A$, and $m$ be an integer such that $0\le m\le 2^{n-1}-l$. Then\[
    \frac{T}{2^{n-1}} = \frac{T+m[0]}{2^{n-1}}.
  \]
\end{lem}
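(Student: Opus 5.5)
Since the lemma sits inside the recursive well-definedness argument, I will prove it by induction on $n\ge 1$. The inductive hypothesis is that division by $2^{k}$ is well defined for all $k\le n-1$, independent of the splittings chosen, and that the statement of the lemma holds at all smaller levels. It suffices to treat $m=1$: the general case follows by adding one $[0]$ at a time, since each intermediate sum $T+i[0]$ still has at most $2^{n-1}$ terms. So the claim to prove is
\[
\frac{T}{2^{n-1}}=\frac{T+[0]}{2^{n-1}}\qquad\text{whenever } T \text{ has } l\le 2^{n-1}-1 \text{ terms}.
\]

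For the base case $n=1$ (division by $2^0$), we have $l\le 0$, so $T=0$ and $T+[0]=[0]$. Both quotients are $0$ by clause (1) of Definition \ref{dfn:quotients of formal sums}, so they agree.

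For the inductive step, suppose $n-1\ge 1$ and write $k=n-1$. Because $T/2^{k}$ is already well defined, we may compute it from any convenient splitting $T=T_1+T_2$ into two sums of at most $2^{k-1}$ terms each. Since $l\le 2^k-1$, some splitting has one part with strictly fewer than $2^{k-1}$ terms; say $T_2$ has $l_2\le 2^{k-1}-1$ terms. Then $T+[0]=T_1+(T_2+[0])$ is a legitimate splitting of $T+[0]$, and the inductive hypothesis at level $k-1$ gives $T_2/2^{k-1}=(T_2+[0])/2^{k-1}$. Hence
\[
\frac{T+[0]}{2^{k}}=\frac{T_1}{2^{k-1}}\oplus\frac{T_2+[0]}{2^{k-1}}=\frac{T_1}{2^{k-1}}\oplus\frac{T_2}{2^{k-1}}=\frac{T}{2^{k}}.
\]

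The main point needing care is the bookkeeping of the induction. The lemma is used \emph{in} the proof that division by $2^n$ is well defined, so it must rely only on well-definedness at levels $\le n-1$ together with the lemma at level $n-2$. I will therefore state the combined induction explicitly: ``well-defined at level $k$ and the lemma holds at level $k$''. I also need to check that the splitting chosen above exists, which follows from the counting $l\le 2^{k}-1<2\cdot 2^{k-1}$. Notably, no scale axiom beyond the definition is needed here; idempotence and mediality will only enter later, in the well-definedness argument itself.
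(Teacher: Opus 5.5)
Your proof is correct and follows essentially the same route as the paper: induction on $n$, computing both quotients through a convenient splitting, and applying the inductive hypothesis for division by $2^{n-2}$ to the half that receives the extra copies of $[0]$. Your preliminary reduction to $m=1$ replaces the paper's case split ($m\ge 2^{n-2}$ versus $m<2^{n-2}$), and it also avoids a tacit assumption in the paper's second case, where splitting $T$ into parts of $l+m-2^{n-2}$ and $2^{n-2}-m$ elements silently requires $l+m\ge 2^{n-2}$.
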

\begin{proof}
  For $n=1$, the result is obvious by the fact that $0/2^0=0=[0]/2^0$. For $n\ge 2$, assume that it holds for one smaller $n$. If $m\ge 2^{n-2}$, then we have\[
    \frac{T+m[0]}{2^{n-1}} = \frac{T+(m-2^{n-2})[0]}{2^{n-2}}\oplus\frac{0+2^{n-2}[0]}{2^{n-2}} = \frac{T}{2^{n-2}}\oplus\frac{0}{2^{n-2}} = \frac{T}{2^{n-1}};
  \]if $m<2^{n-2}$, then splitting $T$ into a formal sum $T_1$ of $(l+m-2^{n-2})$ elements and $T_2$ of $(2^{n-2}-m)$ elements, we have\[
    \frac{T+m[0]}{2^{n-1}} = \frac{T_1}{2^{n-2}}\oplus\frac{T_2+m[0]}{2^{n-2}} = \frac{T_1}{2^{n-2}}\oplus\frac{T_2}{2^{n-2}} = \frac{T}{2^{n-1}}.
  \](In both cases, the first and third equalities are by the definition of division by $2^{n-1}$, and the second is by the induction hypothesis.) Hence, the result holds for all $n\ge 1$.
\end{proof}

\begin{prop}
  In the setting of Definition \ref{dfn:quotients of formal sums} (2), split $S$ into two other formal sums $S'_1$ and $S'_2$ of $2^{n-1}$ or less elements. Then\[
    \frac{S_1}{2^{n-1}}\oplus\frac{S_2}{2^{n-1}} = \frac{S'_1}{2^{n-1}}\oplus\frac{S'_2}{2^{n-1}}.
  \]
\end{prop}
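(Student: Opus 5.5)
The plan is a simultaneous induction on $n$: the well-definedness of $S/2^n$ for all formal sums of at most $2^n$ elements, and Lemma \ref{lem:quotients of formal sums:filling 0} at level $n$, which uses only division by $2^{n-1}$. So assume division by $2^{n-1}$ is well defined. The first move is to pad with zeros. If $S_1$ has $l_1$ elements and $S_2$ has $l_2$ elements, Lemma \ref{lem:quotients of formal sums:filling 0} gives
\[
  \frac{S_i}{2^{n-1}} = \frac{S_i+(2^{n-1}-l_i)[0]}{2^{n-1}}.
\]
Hence both splittings may be replaced by splittings of $\widehat S := S+(2^n-\ell)[0]$ into two halves of exactly $2^{n-1}$ elements each, where $\ell$ is the length of $S$. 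It therefore suffices to show that the expression $\frac{U_1}{2^{n-1}}\oplus\frac{U_2}{2^{n-1}}$ is the same for every splitting of a fixed formal sum $\widehat S$ of exactly $2^n$ elements into two halves $U_1$ and $U_2$ of exactly $2^{n-1}$ elements.

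Any two such splittings are connected by a chain of transpositions, each exchanging one element of $U_1$ with one element of $U_2$. So it suffices to treat $U_1=[a]+V_1$, $U_2=[b]+V_2$ versus $U_1'=[b]+V_1$, $U_2'=[a]+V_2$. For $n=1$ this is just commutativity \ref{scale axiom:commutativity}. For $n\ge 2$, split each of $U_1$ and $U_2$ further, using well-definedness at level $n-1$ (available from the induction hypothesis) to choose convenient sub-splittings. Write $V_1=W_1+X_1$ with $[a]+W_1$ and $X_1$ of size $2^{n-2}$, and similarly $V_2=W_2+X_2$ with $[b]+W_2$ and $X_2$ of size $2^{n-2}$. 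Then
\[
  \frac{U_1}{2^{n-1}}\oplus\frac{U_2}{2^{n-1}} = \Big(\frac{[a]+W_1}{2^{n-2}}\oplus\frac{X_1}{2^{n-2}}\Big)\oplus\Big(\frac{[b]+W_2}{2^{n-2}}\oplus\frac{X_2}{2^{n-2}}\Big).
\]
Applying mediality \ref{scale axiom:mediality} turns this into
\[
  \Big(\frac{[a]+W_1}{2^{n-2}}\oplus\frac{[b]+W_2}{2^{n-2}}\Big)\oplus\Big(\frac{X_1}{2^{n-2}}\oplus\frac{X_2}{2^{n-2}}\Big),
\]
which equals $\frac{[a]+[b]+W_1+W_2}{2^{n-1}}\oplus\frac{X_1+X_2}{2^{n-1}}$ by the definition of division by $2^{n-1}$, and that is well defined by the induction hypothesis. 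This final expression is symmetric in $a$ and $b$. Running the same computation for the swapped splitting, using $[b]+W_1$ and $[a]+W_2$, gives the same result.

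The main obstacle is the bookkeeping that makes the induction legitimate. The lemma must be proved at level $n$ before well-definedness at level $n$ is used; the proof of the lemma as given only uses level $n-1$, so this is consistent. One also needs to confirm that every chosen sub-splitting is a valid splitting into at most $2^{n-2}$ elements. This holds because the padding produces exact sizes, so each piece can be made to have exactly $2^{n-2}$ elements.
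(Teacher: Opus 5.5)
Your proposal is correct and follows essentially the paper's own argument. Both pad with $[0]$'s via Lemma \ref{lem:quotients of formal sums:filling 0}, reduce to a single exchange of one summand between the two halves (the paper does this with the $L^1$-distance $d_1$), refine each half so the exchanged summands sit in matching sub-pieces, and then apply mediality and well-definedness at level $n-1$. Padding each half to exactly $2^{n-1}$ elements, and handling $n=1$ and the simultaneous induction explicitly, are only tidier bookkeeping of that same proof.
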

\begin{proof}
  Here, we consider \emph{$L^1$-distance} between formal sums; given two formal sums $T_1$ and $T_2$ of elements of $A$, we define\[
    d_1(T_1,T_2) := \sum_{a\in A}|m^{(1)}_a-m^{(2)}_a|,
  \]where $m^{(i)}_a$ is the number of summands $[a]$ in $T_i$ for $i=1,2$. By Lemma \ref{lem:quotients of formal sums:filling 0}, for $i=1,2$, we may add some $[0]$ to either $S_i$ or $S'_i$ so that they have the same number of summands. Then it is easy to see that $d_1(S_1,S'_1)=d_1(S_2,S'_2)$, and this $L^1$-distance is even. Since $S_1\ne S'_1$, there are two distinct $a_1,a_2\in A$ such that $S'_1$ has less summands $[a_1]$ and more summands $[a_2]$ than $S_1$. Then\[
    d_1(S_1,\,S_1-[a_1]+[a_2]) = 2 \quad \text{and} \quad d_1(S_1-[a_1]+[a_2],\,S'_1) = d(S_1,S'_1)-2,
  \]as in Figure \ref{fig:positional relation among three points}. Thus, by induction, it suffices to consider the case where $d(S_1,S'_1)=2$. In this case\[
    S'_1 = S_1-[a_1]+[a_2] \quad \text{and} \quad S'_2 = S_2+[a_1]-[a_2].
  \]For $i=1,2$, split $S_i$ into two formal sums $S_{i1}$ and $S_{i2}$ of $2^{n-2}$ or less elements so that $S_{i2}$ has at least one summand $[a_i]$. Then\begin{align}
    & \frac{S_1-[a_1]+[a_2]}{2^{n-1}}\oplus\frac{S_2+[a_1]-[a_2]}{2^{n-1}} \notag \\
    &= \bigg(\frac{S_{11}}{2^{n-2}}\oplus\frac{S_{12}-[a_1]+[a_2]}{2^{n-2}}\bigg)\oplus\bigg(\frac{S_{21}}{2^{n-2}}\oplus\frac{S_{22}+[a_1]-[a_2]}{2^{n-2}}\bigg) \label{eq:quotients of formal sums:well-definedness:01} \\
    &= \bigg(\frac{S_{11}}{2^{n-2}}\oplus\frac{S_{21}}{2^{n-2}}\bigg)\oplus\bigg(\frac{S_{12}-[a_1]+[a_2]}{2^{n-2}}\oplus\frac{S_{22}+[a_1]-[a_2]}{2^{n-2}}\bigg) \label{eq:quotients of formal sums:well-definedness:02} \\
    &= \frac{S_{11}+S_{21}}{2^{n-1}}\oplus\frac{S_{12}+S_{22}}{2^{n-1}}, \label{eq:quotients of formal sums:well-definedness:03}
  \end{align}where \eqref{eq:quotients of formal sums:well-definedness:01} and \eqref{eq:quotients of formal sums:well-definedness:03} are by the definition of division by $2^{n-1}$, and \eqref{eq:quotients of formal sums:well-definedness:02} is by the mediality. Since these equalities hold even when we remove “${}-[a_1]+[a_2]$” and “${}+[a_1]-[a_2]$,” the result follows. \begin{figure}[h]
    \centering
    \begin{tikzpicture}
      \draw[thick] (-2,0)--(4,0);
      \draw (-2,0) to[out=-45,in=225] (0,0);
      \draw (-1,-0.4) node[fill=white]{$2$};
      \fill (-2,0) circle(2pt) node[above]{$S_1$};
      \fill (0,0) circle(2pt) node[above]{$S_1-[a_1]+[a_2]$};
      \fill (4,0) circle(2pt) node[above]{$S'_1$};
    \end{tikzpicture}
    \caption{The positional relation among $S_1$, $S_1-[a_1]+[a_2]$, and $S'_1$.}
    \label{fig:positional relation among three points}
  \end{figure}
\end{proof}

Thus, we have checked that Definition \ref{dfn:quotients of formal sums} is valid. Using induction on $n$, we can also prove the following lemma easily.

\begin{lem}
  \label{lem:properties of formal sums}
  Let $A$ be a scale, $n\ge 0$ be an integer, and $a,a_1,a_2,...,a_k\in A$, where $k\le 2^n$. Then:\begin{enumerate}[label=(\arabic*)]
    \item $\displaystyle \frac{1}{2^n}\sum_{i=1}^ka_i=\frac{1}{2^n}\sum_{i=1}^k[a_i]$ if $A$ is cancellative \\
    (where the sum in the left-hand side is an actual one taken in the enveloping dy-module of $A$, while that in the right-hand side is a formal one); \label{lem:actual and formal sums coincide}
    \item $\displaystyle \frac{1}{2^{n+1}}\sum_{i=1}^k2[a_i]=\frac{1}{2^n}\sum_{i=1}^k[a_i]$; \label{lem:duplicating formal sums}
    \item $\displaystyle \frac{1}{2^n}\Big([a]+[-a]+\sum_{i=1}^k[a_i]\Big) = \frac{1}{2^n}\Big(2[0]+\sum_{i=1}^k[a_i]\Big)$ if $k\le 2^n-2$. \label{lem:liquidation of formal sums}
  \end{enumerate}Furthermore, let $B$ be a scale, and $f\colon A\to B$ be a scale homomorphism. Then:\begin{enumerate}[label=(\arabic*), resume]
    \item $\displaystyle f\bigg(\frac{1}{2^n}\sum_{i=1}^k[a_i]\bigg)=\frac{1}{2^n}\sum_{i=1}^k[f(a_i)]$. \label{lem:homomorphisms preserve formal sums}
  \end{enumerate}
\end{lem}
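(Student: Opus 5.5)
The plan is to prove all four parts by induction on $n$, in each case using the recursive definition \eqref{eq:quotients of formal sums:recursive step} together with its well-definedness. Since the splitting into $S_1$ and $S_2$ is free, we can always choose the splitting that suits the argument.

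\textbf{Part (1).} For $n=0$ we have $k\le 1$. The empty formal sum gives $0$, and $[a_1]$ gives $a_1$, so both sides agree. For $n\ge 1$, split the $a_i$ into two groups of at most $2^{n-1}$ elements. The actual sum divided by $2^n$ equals the midpoint in the enveloping dy-module $E(A)$ of the two halves divided by $2^{n-1}$, using $x\oplus y=(x+y)/2$ there. By induction each half coincides with the corresponding formal quotient, which lies in $A$. Since $\varepsilon\colon A\to E(A)$ preserves $\oplus$, the formula also holds inside $A$. (Injectivity of $\varepsilon$ for cancellative $A$ is implicit in the statement, which regards $A$ as sitting inside $E(A)$.)

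\textbf{Part (4).} This is an identical induction. Use $f(0)=0$ in the base case, and use that $f$ preserves $\oplus$ in the recursive step, with the same splitting of the $a_i$ on both sides.

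\textbf{Part (2).} For $n=0$, either $k=0$, where both sides are $0$, or $k=1$, where $2[a_1]/2=a_1\oplus a_1=a_1$ by idempotence. For $n\ge1$, split $\sum[a_i]$ into halves $S_1,S_2$ of at most $2^{n-1}$ summands. Then $\sum 2[a_i]=2S_1+2S_2$, a splitting into two sums of at most $2^n$ summands. The definition and the induction hypothesis give
\[
\frac{2S_1}{2^n}\oplus\frac{2S_2}{2^n}=\frac{S_1}{2^{n-1}}\oplus\frac{S_2}{2^{n-1}}.
\]

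\textbf{Part (3).} Here $n\ge1$ is forced by $k\le 2^n-2$, and this is the step needing care. The natural splitting puts $[a]+[-a]$ together in one half. That half must then have at most $2^{n-1}$ summands, which is possible by putting the remaining $a_i$ appropriately, since $k+2\le 2^n$. We then reduce to showing
\[
\frac{[a]+[-a]+T}{2^{n-1}}=\frac{2[0]+T}{2^{n-1}}
\]
with $T$ having at most $2^{n-1}-2$ summands. This is just the same statement one level down, so induction applies, provided $n-1\ge1$. The base case is $n=1$, forcing $k=0$, where we need $a\oplus(-a)=0\oplus0$. This holds by \ref{scale axiom:negation} and idempotence.

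The main obstacle is the case where the first half can contain $[a]+[-a]$ but the inductive hypothesis at level $n-1$ would need that half to hold at most $2^{n-1}-2$ other summands. This is exactly what the splitting guarantees. Alternatively, one can split so that $[a]$ and $[-a]$ go together as a pair at the deepest level. This uses Lemma \ref{lem:quotients of formal sums:filling 0} to pad with $[0]$'s, giving the nested decomposition in which $[a]+[-a]$ forms a block divided by $2^1$. Then $a\oplus(-a)=0=0\oplus0$ replaces it directly, and the repeated use of well-definedness finishes the argument.
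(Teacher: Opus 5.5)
Your proof is correct and takes essentially the paper's approach: the paper only says the lemma follows "easily" by induction on $n$ from the recursive definition, and you carry out those inductions. In (3) you treat the base case $n=1$ directly and split so that $[a]+[-a]$ stays in one half with at most $2^{n-1}-2$ other summands, which is the care the step needs.
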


Now, we are ready for the proof of the main result: \begin{thm}
  \label{thm:free scales}
  For each set $J$, the scale\[
    F(J) := \Big\{(x_j)_{j\in J}\in\set I^J\ \Big|\ x_j\ne 0\text{ for only finitely many }j\in J\text{ and }\sum_{j\in J}|x_j|\le 1\Big\}
  \]is freely generated by $\{\varepsilon_j\mid j\in J\}$, where $\varepsilon_j$ is the point of which the $j$-coordinate is $1$ and all the others are $0$.
\end{thm}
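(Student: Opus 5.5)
Given a scale $A$ and a map $f\colon J\to A$, I will define $\overline f\colon F(J)\to A$ using the formal-sum quotients of Definition \ref{dfn:quotients of formal sums}. Take $x=(x_j)\in F(J)$ and choose $n$ large enough that every $x_j$ can be written as $x_j=k_j/2^n$ with $k_j\in\set Z$. Then $\sum_j|k_j|\le 2^n$. For $k\in\set Z$ and $a\in A$, write $k\langle a\rangle$ for $k[a]$ if $k\ge0$ and for $|k|[-a]$ if $k<0$. Set
\[
  \overline f(x) := \frac{1}{2^n}\sum_{j\in J}k_j\langle f(j)\rangle .
\]
This is a formal sum of at most $2^n$ elements, so the quotient is defined. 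Replacing $n$ by $n+1$ doubles every $k_j$, and Lemma \ref{lem:properties of formal sums} \ref{lem:duplicating formal sums} shows the value does not change. Hence $\overline f$ is well defined, independently of $n$. For $x=\varepsilon_j$ take $n=0$, which gives $\overline f(\varepsilon_j)=f(j)$.

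Next I check that $\overline f$ is a homomorphism. Negation: $\overline f(-x)$ uses the same formal sum with every entry negated, and by Lemma \ref{lem:properties of formal sums} \ref{lem:homomorphisms preserve formal sums}, applied to the homomorphism $-\colon A\to A$ (a scale homomorphism by \ref{scale axiom:-(a+b)=(-a)+(-b)} and \ref{scale axiom:-(-a)=a}), this equals $-\overline f(x)$. Zero follows from Remark \ref{rmk:redundancy of preservation of 0}.

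Midpointing is the main obstacle. For $x,y\in F(J)$ use a common $n$ with numerators $k_j,l_j$. Splitting the $2^{n+1}$-sum into two halves, Definition \ref{dfn:quotients of formal sums} gives
\[
  \overline f(x)\oplus\overline f(y)=\frac{1}{2^{n+1}}\Big(\sum_j k_j\langle f(j)\rangle+\sum_j l_j\langle f(j)\rangle\Big).
\]
On the other hand, $x\oplus y$ has numerators $k_j+l_j$ over $2^{n+1}$. When $k_j$ and $l_j$ have opposite signs, the formal sum above contains pairs $[f(j)]+[-f(j)]$ that must be cancelled. Lemma \ref{lem:properties of formal sums} \ref{lem:liquidation of formal sums} replaces each such pair by $2[0]$, and Lemma \ref{lem:quotients of formal sums:filling 0} then deletes the zeros. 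After these cancellations we reach exactly $\frac{1}{2^{n+1}}\sum_j (k_j+l_j)\langle f(j)\rangle=\overline f(x\oplus y)$. The care needed is bookkeeping: the size constraint $k\le 2^n-2$ in \ref{lem:liquidation of formal sums} must hold at each cancellation, and it does because the total count is at most $2^{n+1}$ and each step removes a pair.

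Uniqueness: $F(J)$ is generated as a scale by the $\varepsilon_j$. I would prove this by induction on $n$, showing that every $x$ with numerators over $2^n$ and $\sum|k_j|\le 2^n$ is a midpoint of two such points with denominator $2^{n-1}$. To do this, split the multiset of $\pm\varepsilon_j$, counted $|k_j|$ times and padded with zeros up to $2^n$ entries, into two halves of size $2^{n-1}$. Each half defines a point of $F(J)$ with denominator $2^{n-1}$, and $x$ is their midpoint. The base case $n=0$ gives points of the form $0$ or $\pm\varepsilon_j$. Therefore any two homomorphisms that agree on the $\varepsilon_j$ are equal. As a consistency check, I would note that this same splitting is what Definition \ref{dfn:quotients of formal sums} mirrors, so the existence and uniqueness arguments fit together.
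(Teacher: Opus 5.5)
Your proof is correct. Its existence half coincides with the paper's argument:
\begin{itemize}
\item the same formula for $\overline f$;
\item independence of $n$ by Lemma \ref{lem:properties of formal sums} \ref{lem:duplicating formal sums};
\item negation by \ref{lem:homomorphisms preserve formal sums} applied to $-\colon A\to A$;
\item midpoints by liquidation \ref{lem:liquidation of formal sums}.
\end{itemize}
Your coordinatewise cancellation of $\min(|k_j|,|l_j|)$ pairs $[f(j)]+[-f(j)]$ is exactly what the paper's six domains $D_1,\dots,D_6$ organize. Your explicit appeal to Lemma \ref{lem:quotients of formal sums:filling 0} to discard the resulting copies of $[0]$ supplies a step the paper leaves tacit.

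Where you genuinely differ is uniqueness. The paper obtains it by computing what $\overline f(x)$ is forced to be. Since $F(J)$ is cancellative, Lemma \ref{lem:properties of formal sums} \ref{lem:actual and formal sums coincide} rewrites $x$ as the formal-sum quotient $\frac{1}{2^n}\sum_j 2^n|x_j|[\sgn(x_j)\varepsilon_j]$, and \ref{lem:homomorphisms preserve formal sums} then pins down $\overline f(x)$. This has the advantage of simultaneously motivating the definition of $\overline f$. However, it rests on part \ref{lem:actual and formal sums coincide}, which the paper only sketches and which passes through the enveloping dy-module of $F(J)$.

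You instead show directly that $\{\varepsilon_j\}$ generates $F(J)$ as a scale, by halving the zero-padded multiset of $\pm\varepsilon_j$'s, where each half has $L^1$-norm at most $2^{n-1}$ by the triangle inequality. You then conclude because two homomorphisms agreeing on generators agree everywhere. This route is elementary and self-contained. In substance the two are the same computation: part \ref{lem:actual and formal sums coincide} applied to $F(J)$ says precisely that $x$ is the iterated midpoint of $\pm\varepsilon_j$'s and $0$ that your splitting produces.
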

\begin{proof}
  Let $A$ be a scale, and $f\colon J\to A$ be a map. First, we investigate if there were a scale homo\-morphism $\overline f\colon F(J)\to A$ such that $\overline f(\varepsilon_j)=f(j)$ for all $j\in J$, then what $\overline f(x)$ would have to be for each $x=(x_j)_{j\in J}\in F(J)$. Choosing an integer $n(\ge 0)$ so that $2^nx_j$ is an integer for all $j\in J$, we can express\[
    x = \frac{1}{2^n}\sum_{j\in J}2^n|x_j|\sgn(x_j)\varepsilon_j.
  \](Here, $\sgn(x_j)$ denotes the sign of $x_j$, which is $1$, $0$, or $-1$.) Note that $\sum_{j\in J}2^n|x_j|\le 2^n$ since $\sum_{j\in J}|x_j|\le 1$. Thus, applying Lemma \ref{lem:properties of formal sums} \ref{lem:actual and formal sums coincide} and \ref{lem:homomorphisms preserve formal sums} in this order, we must have\[
    \overline f(x) = \frac{1}{2^n}\sum_{j\in J}2^n|x_j|[\sgn(x_j)f(j)].
  \](Interpret $1f(j):=f(j)$, $0f(j):=0$, and $(-1)f(j):=-f(j)$.) Conversely, we define a map $\overline f\colon F(J)\to A$ in this way. Using Lemma \ref{lem:properties of formal sums} \ref{lem:duplicating formal sums} repeatedly, we find that the right-hand side does not depend on the choice of $n$. Since negation $-\colon A\to A$ is a scale homomorphism, it immediately follows from Lemma \ref{lem:properties of formal sums} \ref{lem:homomorphisms preserve formal sums} that $\overline f$ preserves negation. To check that $\overline f$ preserves midpoints, take any\[
    x = (x_j)_{j\in J}, \ y = (y_j)_{j\in J} \in F(J),
  \]and choose an integer $n$ so that $2^nx_j$ and $2^ny_j$ are integers for all $j\in J$. As in Figure \ref{fig:partition into six domains}, partition the plane $\set R^2$ into the following six domains:\begin{align*}
    D_1 &:= \{(x,y)\in\set R^2\mid x\ge 0\text{ and }y\ge 0\}; \\
    D_2 &:= \{(x,y)\in\set R^2\mid x<0\text{ and }y<0\}; \\
    D_3 &:= \{(x,y)\in\set R^2\mid x<0,\ y>0,\text{ and }x+y\ge 0\}; \\
    D_4 &:= \{(x,y)\in\set R^2\mid x\ge 0,\ y<0,\text{ and }x+y<0\}; \\
    D_5 &:= \{(x,y)\in\set R^2\mid x>0,\ y<0,\text{ and }x+y\ge 0\}; \\
    D_6 &:= \{(x,y)\in\set R^2\mid x<0,\ y\ge 0,\text{ and }x+y<0\}.
  \end{align*}Let $J_k:=\{j\in J\mid(x_j,y_j)\in D_k\}$ for $k=1,2,...,6$. Then\begin{align}
    \overline f(x)\oplus\overline f(y) &= \frac{1}{2^n}\sum_{j\in J}2^n|x_j|[\sgn(x_j)f(j)]\oplus\frac{1}{2^n}\sum_{j\in J}2^n|y_j|[\sgn(y_j)f(j)] \label{eq:free scales:f preserves midpoints:01} \\
    &= \frac{1}{2^{n+1}}\sum_{j\in J}(2^n|x_j|[\sgn(x_j)f(j)]+2^n|y_j|[\sgn(y_j)f(j)]) \label{eq:free scales:f preserves midpoints:02} \\
    &= \frac{1}{2^{n+1}}\Big(\sum_{j\in J}2^{n+1}|x_j\oplus y_j|[\sgn(x_j\oplus y_j)f(j)] \label{eq:free scales:f preserves midpoints:03} \\
    &\hspace{4em} {}+\hspace{-0.5em}\sum_{j\in J_3\cup I_4}\hspace{-0.5em}2^n|x_j|([f(j)]+[-f(j)])+\hspace{-0.5em}\sum_{j\in J_5\cup I_6}\hspace{-0.5em}2^n|y_j|([f(j)]+[-f(j)])\Big) \notag \\
    &= \frac{1}{2^{n+1}}\sum_{j\in J}2^{n+1}|x_j\oplus y_j|[\sgn(x_j\oplus y_j)f(j)] \label{eq:free scales:f preserves midpoints:04} \\
    &= \overline f(x\oplus y), \label{eq:free scales:f preserves midpoints:05}
  \end{align}where \eqref{eq:free scales:f preserves midpoints:01} is by the definition of $\overline f$, \eqref{eq:free scales:f preserves midpoints:02} is by the definition of division by $2^{n+1}$, \eqref{eq:free scales:f preserves midpoints:03} is by an elementary (yet a little bit tedious) calculation, \eqref{eq:free scales:f preserves midpoints:04} is by Lemma \ref{lem:properties of formal sums} \ref{lem:liquidation of formal sums}, and \eqref{eq:free scales:f preserves midpoints:05} is, again, by the definition of $\overline f$. Thus, we have obtained the desired scale homomorphism $\overline f\colon F(J)\to A$. \begin{figure}[h]
    \centering
    \begin{tikzpicture}
      \draw (1,1) node {$D_1$};
      \draw[->] (0,0)--(2,0) node[right] {$x$};
      \draw[->] (0,0)--(0,2) node[above] {$y$};
      \draw (-1,-1) node {$D_2$};
      \draw[dashed] (-2,-0.1)--(-0.1,-0.1)--(-0.1,-2);
      \draw (-0.6,1.4) node {$D_3$};
      \draw (-0.1,0.1)--(-2,2);
      \draw[dashed] (-0.1,0.1)--(-0.1,1.85);
      \draw (0.6,-1.4) node {$D_4$};
      \draw (0,-0.15)--(0,-2);
      \draw[dashed] (0,-0.15)--(1.85,-2);
      \draw (1.4,-0.6) node {$D_5$};
      \draw (0.1,-0.1)--(2,-2);
      \draw[dashed] (0.1,-0.1)--(1.85,-0.1);
      \draw (-1.4,0.6) node {$D_6$};
      \draw (-0.15,0)--(-2,0);
      \draw[dashed] (-0.15,0)--(-2,1.85);
    \end{tikzpicture}
    \caption{Partition of $\set R^2$ into six domains.}
    \label{fig:partition into six domains}
  \end{figure}
\end{proof}

\begin{rmk} \
  \begin{enumerate}[label=(\arabic*)]
    \item Freyd proved that the Freyd scale of all “continuous piecewise dy-affine” functions from $\closedinterval^n$ to $\closedinterval$ (where $\closedinterval$ is any closed interval) is freely generated by the $n$ projections (\cite[Theorem 20.1]{Fre08}). Letting it denoted by $A$, and considering it as a symmetric midpoint algebra, we have an isomorphism\[
      F(\{1,2,...,n+1\}) \xrightarrow{\sim} A
    \]which sends $\varepsilon_1,\varepsilon_2,...,\varepsilon_n$ to the $n$ projections, and $\varepsilon_{n+1}$ to the constant function $\top$ whose value is the top of the interval $\closedinterval$. When we consider $A$ as a Freyd scale, the function $\top$ is a fundamental operation, and thus not counted as a generator. This is why the number of generators differs depending on what type of algebra we consider $A$ as.
    \item There are two possible reasons why Freyd used the symbol $\set I$ for the set of dyadic rational numbers between $-1$ and $1$. One is simply that it is in the \emph{i}nterval $[-1,1]$. The other is that, when we consider $\set I$ as a Freyd scale under the fundamental operations inherited from $[-1,1]$ (which was defined in Example \ref{eg:Freyd scales}), it is an \emph{i}nitial object of the category of Freyd scales (or precisely, the category whose objects are Freyd scales and morphisms are maps preserving all the fundamental operations $\mid$, $^\bullet$, $^\wedge$, and $\top$) (\cite[3.1 Theorem]{Fre08}). Note that the initial object of our category $\category{Scl}$ is not $F(\{1\})=\set I$, but $F(\varnothing)=\{0\}$.
  \end{enumerate}
\end{rmk}

From a category-theoretic point of view, we can say that the forgetful functor $U\colon\category{Scl}\to\category{Set}$ has a left adjoint $F$, which sends each set to a free scale on it:\[
  \begin{tikzcd}
    \category{Scl} \arrow["U"', r, shift right=2] & \category{Set}. \arrow["\bot", "F"', l, shift right=2]
  \end{tikzcd}
\]

\begin{cor}
  The category $\category{Scl}$ is the smallest variety that contains $\set I$.
\end{cor}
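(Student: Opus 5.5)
Since $\set I$ is a scale and $\category{Scl}$ is a variety, every variety generated by $\set I$ is contained in $\category{Scl}$. So it suffices to show the converse: every scale lies in the variety $\mathcal V := \mathsf{HSP}(\set I)$ generated by $\set I$. I will use the standard facts that a variety is closed under homomorphic images and that every scale is a homomorphic image of a free scale. The plan is therefore to show that every free scale $F(J)$ of Theorem \ref{thm:free scales} lies in $\mathsf{SP}(\set I)$.

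First, $F(J)$ is by its very definition a subset of $\set I^J$. It was noted before Theorem \ref{thm:free scales} that $F(J)$ is closed under the pointwise operations, since the $L^1$-norm satisfies $\|x\oplus y\|\le \tfrac12(\|x\|+\|y\|)\le 1$ and $\|-x\|=\|x\|$, and $0\in F(J)$. Hence $F(J)$ is a subscale of the direct product $\set I^J$, and so $F(J)\in\mathsf{SP}(\set I)\subseteq\mathcal V$.

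Second, let $A$ be any scale and take $J$ to be the underlying set of $A$ with $f=\mathrm{id}\colon J\to A$. By Theorem \ref{thm:free scales} there is a scale homomorphism $\overline f\colon F(J)\to A$ with $\overline f(\varepsilon_a)=a$, and this map is surjective. Thus $A$ is a homomorphic image of a member of $\mathcal V$, so $A\in\mathcal V$. Together with the first paragraph this gives $\mathcal V=\category{Scl}$.

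The only step that needs any care is the first: checking that $F(J)$ really is a subscale of the product, closed under $\oplus$ and $-$. This is immediate from the triangle inequality. Equivalently, one could phrase the argument via Birkhoff's theorem: any scale identity that holds in $\set I$ also holds in $F(J)\subseteq\set I^J$ for finite $J$, hence holds in all scales, because $F(J)$ is free. I would use the $\mathsf{HSP}$ phrasing above since it is shorter.
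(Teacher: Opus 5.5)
Your proof is correct and takes essentially the same route as the paper. Every scale $A$ is the image of $\overline{\mathrm{id}}\colon F(A)\to A$, and by Theorem~\ref{thm:free scales} the free scale $F(A)$ is a subscale of a power of $\set I$, so any variety containing $\set I$ contains every scale; you additionally spell out the closure check for $F(J)$ and the reverse inclusion, which the paper leaves implicit.
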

\begin{proof}
  We already know that $\category{Scl}$ is a variety. Every scale $A$ is a homomorphic image of a free scale; more precisely, $A$ is the image of the unique scale homomorphism $\overline{\mathrm{id}}\colon F(A)\to A$ such that the triangle\[
    \begin{tikzcd}
      F(A) \arrow["\overline{\mathrm{id}}", dr] \\
      A \arrow["\mathrm{id}"', r] \arrow["\varepsilon", u] & A
    \end{tikzcd}
  \](where $\mathrm{id}$ denotes the identity map on $A$) commutes. Furthermore, by Theorem \ref{thm:free scales}, every free scale is a subscale of a direct product of copies of $\set I$. Hence, if a variety of algebras of type $\langle 2,1,0\rangle$ contains $\set I$, then it must contain all scales.
\end{proof}

\section{Tensor products and scalic rings}

\begin{dfn}
  Let $A$, $B$, and $C$ be scales. A map $f\colon A\times B\to C$ is a \demph{bihomomorphism} if:\begin{itemize}
    \item for every $a\in A$, the map $B\ni b\mapsto f(a,b)\in C$ is a scale homomorphism;
    \item for every $b\in B$, the map $A\ni a\mapsto f(a,b)\in C$ is a scale homomorphism.
  \end{itemize}
\end{dfn}

\begin{dfn}
  A \demph{tensor product} of scales $A$ and $B$ is a scale $A\otimes B$ together with a bihomomorphism $\otimes\colon A\times B\to A\otimes B$ that has the following universality:\begin{quote}
    for every scale $C$ together with a bihomomorphism $f\colon A\times B\to C$, there is a unique scale homomorphism $\overline f\colon A\otimes B\to C$ such that the triangle\[
      \begin{tikzcd}
        A\otimes B \arrow["\overline f", dashed, dr] \\
        A\times B \arrow["f"', r] \arrow["\otimes", u] & C
      \end{tikzcd}
    \]commutes.
  \end{quote}
\end{dfn}

Tensor products of scales can be explicitly constructed in almost the same way as how to construct those of abelian groups, or more generally, modules over a commutative ring (which can be found in \cite[Chapter XVI, Section 1]{Lan02}). The only difference is using a congruence instead of a subalgebra to take a quotient. Given scales $A$ and $B$, let $\sim$ be the smallest congruence on the free scale $F(A\times B)$ such that for all $a,a'\in A$ and $b,b'\in B$:\begin{itemize}
  \item $\varepsilon(a,b\oplus b')\sim\varepsilon(a,b)\oplus\varepsilon(a,b')$;
  \item $\varepsilon(a\oplus a',b)\sim\varepsilon(a,b)\oplus\varepsilon(a',b)$;
  \item $\varepsilon(a,-b)\sim{-\varepsilon(a,b)}\sim\varepsilon(-a,b)$.
\end{itemize}(Here, $\varepsilon\colon A\times B\hookrightarrow F(A\times B)$ is the canonical injection). Then we can take $A\otimes B:=F(A\times B)/{\sim}$.

The category $\category{Ab}$ of abelian groups is a \emph{symmetric monoidal closed category} with respect to the usual tensor products, with monoidal unit $\set Z$, and rings can be defined as monoid objects in $\category{Ab}$ (\cite[Chapter VII]{ML98}). The category $\category{Scl}$ is also a symmetric monoidal closed category in exactly the same way, with monoidal unit $F(\{1\})\cong\set I$. Thus, let us call monoid objects in $\category{Scl}$ \emph{scalic rings}. Explicitly: \begin{dfn}
  A \demph{scalic ring} is an algebra $(A,\oplus,\cdot,-,0,1)$ of type $\langle 2,2,1,0,0\rangle$, consisting of:\begin{itemize}
    \item operations $\oplus$, $-$, and $0$ under which $A$ is a scale;
    \item a binary operation $\cdot$ called \demph{multiplication};
    \item a nullary operation $1$ called the \demph{unity},
  \end{itemize}satisfying the following axioms for all $a,b,c\in A$:\begin{itemize}
    \item (associativity) $(a\cdot b)\cdot c=a\cdot(b\cdot c)$;
    \item (unitality) $a\cdot 1=a=1\cdot a$;
    \item (distributivity) $\begin{cases}
      a\cdot(b\oplus c) = a\cdot b\oplus a\cdot c; \\
      (a\oplus b)\cdot c = a\cdot c\oplus b\cdot c; \\
      a\cdot(-b) = -(a\cdot b) = (-a)\cdot b.
    \end{cases}$
  \end{itemize}
\end{dfn}

\begin{dfn}
  A \demph{scalic subring} of a scalic ring $A$ is a subscale $S\subset A$ such that:\begin{itemize}
    \item (closure under multiplication) $ab\in S$ for all $a,b\in S$;
    \item $1\in S$.
  \end{itemize}
\end{dfn}

\begin{eg} \
  \begin{enumerate}[label=(\arabic*)]
    \item Like every module over the ring $\set D$ of dyadic rational numbers (which we have so far called dy-module) is a scale, every (associative unital) algebra over $\set D$ is a scalic ring in an obvious way.
    \item Let $A$ be a normed algebra (where $\|1\|=1$) over $\set R$ or $\set C$, and $B$ be a subalgebra of $A$ over $\set D$. Then the set $\{x\in B\mid\|x\|\le 1\}$, that is, the intersection of $B$ and the closed unit ball of $A$ is a scalic subring of $B$. In particular:\begin{itemize}
      \item the set $\set I$ of dyadic rational numbers between $-1$ and $1$ is a scalic subring of $\set D$;
      \item the closed interval $[-1,1]$ is a scalic subring of $\set R$;
      \item the closed unit disk $\{z\in\set C\mid|z|\le 1\}$ is a scalic subring of $\set C$.
    \end{itemize}
    \item Let $L$ be a distributive lattice with a greatest element $\top$. (We do not require $L$ to have a least element.) Define\[
      L' := (\{-1,0,1\}\times L)/{\sim},
    \]where $\sim$ is the equivalence relation such that for all $s,t\in\{-1,0,1\}$ and $a,b\in L$,\[
      (s,a)\sim(t,b) \quad \text{if and only if} \quad (s,a)=(t,b)\text{ or }s=t=0.
    \]Then $L'$ is a scalic ring when we define:\begin{align*}
      (s,a)\oplus(t,b) &:= (s\barwedge t,\,a\vee b); \\
      -(s,a) &:= (-s,a); \\
      0 &:= (0,\top); \\
      (s,a)\cdot(t,b) &:= (s\cdot t,\,a\wedge b); \\
      1 &:= (1,\top).
    \end{align*}Here, $s\barwedge t$ denotes the meet of $s$ and $t$ defined in Example \ref{eg:three-element semilattice}, and $s\cdot t$ their usual product. By abuse of notation, we write $(s,a)$ to mean the equivalence class of the pair $(s,a)$. Since $0$ is an absorbing element for both $\barwedge$ and $\cdot$\,, even if $s=0$ or $t=0$, the right-hand sides do not depend on the choice of $a$ and $b$. Of course, the distributivity\[
      (\alpha\oplus\beta)\cdot\gamma = \alpha\cdot\gamma\oplus\beta\cdot\gamma \quad \text{and} \quad \gamma\cdot(\alpha\oplus\beta) = \gamma\cdot\alpha\oplus\gamma\cdot\beta
    \](where $\alpha,\beta,\gamma\in L'$) follows from that of the lattice $L$ if $\alpha\oplus\beta\ne 0$; otherwise these equations still hold because both sides will be $0$ by the definitions of $\oplus$ and $\cdot$\,.
    \item Let $M$ be a cancellative monoid, that is, a monoid such that for all $a,b,c\in M$,\[
      ab=ac\text{ implies }b=c, \quad \text{and} \quad ac=bc\text{ implies }a=b.
    \]Define a set $M'$ in exactly the same way as how we defined $L'$ in (3). Then $M'$ is a scalic ring when we define the fundamental operations similarly, with modification as follows:\begin{itemize}
      \item replace $\wedge$ and $\top$ with the binary operation and identity element of $M$, respectively;
      \item if $s=t$, then redefine\[
        (s,a)\oplus(t,b) := \begin{cases}
          (s,a) & \text{if }a=b; \\
          0 & \text{otherwise}.
        \end{cases}
      \]
    \end{itemize}In fact, the distributivity\[
      (\alpha\oplus\beta)\cdot\gamma = \alpha\cdot\gamma\oplus\beta\cdot\gamma \quad \text{and} \quad \gamma\cdot(\alpha\oplus\beta) = \gamma\cdot\alpha\oplus\gamma\cdot\beta
    \](where $\alpha,\beta,\gamma\in M'$) relies largely on the cancellativity of the monoid $M$: if $\alpha\ne\beta$, then it follows that $\alpha\cdot\gamma\ne\beta\cdot\gamma$, so that both sides will be $0$; otherwise these equations still hold by the idempotence of $\oplus$.
  \end{enumerate}
\end{eg}

\begin{rmk}
  The theories of abelian groups and scales are both \emph{commutative theories}, which means that every fundamental operation commutes with every fundamental operation. For instance, the mediality of midpointing $\oplus$ exactly means that $\oplus$ commutes with itself. In fact, every variety of algebras with commutative theory admits tensor products to become a symmetric monoidal closed category, similarly to $\category{Ab}$ and $\category{Scl}$ (\cite[Theorem 3.10.3]{Bor94}).
\end{rmk}

\end{document}